\providecommand\theHALG@line{\thealgorithm.\arabic{ALG@line}}
\providecommand{\algorithmname}{Algorithm}
\newcounter{rmq}[section]
\newcommand{\setX}{\mathcal{X}}
\newcommand{\ui}{[0,1)} % should we take [0,1] or [0,1)? 
\renewcommand{\P}{\mathbb{P}}
\newcommand{\F}{\mathcal{F}} %sigma-algebra general
\newcommand{\Unif}{\mathcal{U}} % Uniform
\newcommand{\bx}{x}
\newcommand{\bu}{u}
\newcommand{\by}{y}
\newcommand{\bbx}{X}
\newcommand{\bby}{Y}
\newcommand{\bbu}{U}
\newcommand{\ind}{\mathds{1}}
\newcommand{\dd}{\mathrm{d}}
\newcommand{\bigO}{\mathcal{O}} %complexity
\newcommand{\smallo}{{\scriptscriptstyle\mathcal{O}}} %complexity
\newcommand{\cvz}{\rightarrow 0} % ->0 (ok, this is lazy...)
\renewcommand{\emptyset}{\varnothing} % because standard empty set is really ugly
\newcommand{\comment}[1]{ \ifthenelse{ \equal{\showcomment}{true} }{ {\bf #1} }{} }
\newcommand{\showcomment}{true}
\newtheorem{thm}{Theorem}
\newtheorem{corollary}{Corollary}
\newtheorem{rem}{Remark}
\newtheorem{lemma}{Lemma}
\newtheorem{definition}{Definition}
\newcommand{\subscript}[2]{$#1 _ #2$}
\begin{document}

\title{Convergence Results for a Class of Time-Varying Simulated Annealing Algorithms}
\author{Mathieu Gerber\thanks{Present address: University of Bristol, School of Mathematics. Email: mathieu.gerber@bristol.ac.uk (corresponding author)} 
\and Luke Bornn\thanks{Department of Statistics and Actuarial Science, Simon Fraser University}}
\date{}
\maketitle

\begin{abstract}
We provide a  set of conditions which ensure the almost sure convergence of a class of simulated annealing algorithms on a bounded set $\setX\subset\mathbb{R}^d$ based on a time-varying Markov kernel. The class of algorithms considered in this work encompasses the one studied in \citet{Belisle1992} and \citet{Yang2000} as well as its derandomized version recently proposed by \citet{QMC-SA}. To the best of our knowledge, the results we derive 
are the first examples of almost sure convergence results for simulated annealing based on a time-varying kernel. In addition, the assumptions on the Markov kernel and on the cooling schedule have the advantage of being trivial to verify in practice.

\textit{Keywords:} Digital sequences; Global optimization; Simulated annealing
\end{abstract}

\section{Introduction}

Simulated annealing (SA) algorithms are well known tools to evaluate the global optimum of a real-valued function $\varphi$ defined on a measurable set $\setX\subseteq\mathbb{R}^d$. Given a starting value $\bx_0\in\setX$, SA algorithms are determined by a sequence of Markov kernels $(K_n)_{n\geq 1}$, acting from $\setX$ into itself, and a sequence of temperatures (also called cooling schedules) $(T_n)_{n\geq 1}$ in $\mathbb{R}_{>0}$. Simulated annealing algorithms have been extensively studied in the literature and it is now well established that, under mild assumptions on $\varphi$ and on   these two tuning  sequences, the resulting time-inhomogeneous  Markov chain $(\bbx^n)_{n\geq 1}$ is such that the sequence of value functions $(\varphi(\bbx^n))_{n\geq 1}$ converges (in some sense) to $\varphi^*:=\sup_{\bx\in\setX}\varphi(\bx)$. Most of these results are derived  under the condition  $K_n=K$ for all $n\geq 1$, see for instance see \citet{Belisle1992} and \citet{Locatelli2000} for convergence results  of SA on  bounded spaces and \citet{Andrieu2001} and \citet{Rubenthaler2009} for results on unbounded spaces. 
 
However, it is a common practise to use as input of SA a sequence of Markov kernels $(K_n)_{n\geq 1}$ whose variance reduces over time in order to improve local exploration as $n$ increases. For instance, the simulated annealing functions in Matlab (function \texttt{simulannealbnd}) and in R (option ``SANN'' of the function \texttt{optim}) are both based on a Markov kernel whose scale factor is proportional to the current temperature. Some convergence results for such  SA algorithms  based on a time-varying Markov kernel can be found e.g. in \citet{Yang2000}.

Recently, \citet{QMC-SA}  proposed a  modification of SA algorithms where extra dependence among the random variables generated in the course of the algorithm  is introduced to improve the exploration of the state space. The idea behind this new optimization strategy is to replace in SA algorithms the underlying i.i.d.\ uniform random numbers in $\ui$ by points taken from a random sequence with better equidistribution properties. More precisely, \citet{QMC-SA} take for this latter a $(t,s)_R$-sequence, where the parameter $R\in\bar{\mathbb{N}}:=\{0,1,\dots,\infty\}$ controls for the degree of randomness of the input point set, with the case $R=0$ corresponding to i.i.d.\ uniform random numbers and the limiting case $R=\infty$ to a particular construction of quasi-Monte Carlo (QMC) sequences known as $(t,s)$-sequences; see Section \ref{sub:SA_R} for more details. Convergence results and numerical analysis illustrating the good performance of the resulting algorithm are given in \citet{QMC-SA}. Their theoretical analysis only applies for the case where $K_n=K$ for all $n\geq 1$;  in practice, as explained above, it is however desirable to allow the kernels to shrink over time to improve local exploration as the chain becomes more concentrated around the global optimum.

In this work we study  SA type algorithms based on a time-varying kernel by making two important contributions. First, we provide under minimal assumptions an almost sure convergence result for Monte Carlo SA which constitute, to the best of our knowledge, the first almost sure  convergence result for this class of algorithms. Second, we extend the analysis of  \citet{QMC-SA} to the time-varying set-up. As in \citet{Ingber1989} and \citet{Yang2000}, the conditions on the sequence $(K_n)_{n\geq 1}$ for our results to hold amount to imposing a bound on the rate at which the tails of $K_n$ decrease as $n\rightarrow \infty$. Concerning the cooling schedules,  all the results presented in this paper only require that, as in \citet{QMC-SA}, the sequence $(T_n)_{n\geq 1}$ is such that the series $\sum_{n=1}^{\infty}T_n\log(n)$ converges.

The results presented below concern the limit of the sequence  $(\varphi(X^n))_{n\geq 1}$ but, in practise, we are mostly interested in the sequence $\big(\max_{1\leq k\leq n}\varphi(X^k)\big)_{n\geq 1}$ to estimate  $\varphi^*$. However, if $\varphi^*<+\infty$ (as assumed below) it is clear from the relation 
$$
\varphi(X^n)\leq \max_{1\leq k\leq n}\varphi(X^k)\leq\varphi^*,\quad\forall n\geq 1
$$
that the convergence of the former sequence to $\varphi^*$ implies the convergence of the latter sequence to that value.
 
The rest of the paper is organized as follows. Section \ref{sec:prel} introduces the notation and the general class of SA algorithms studied in this work. The main results are provided in Section \ref{sec:Main} and  are illustrated for some classical choice of Markov kernels in Section \ref{sec:ill}. All the proofs are collected in Section \ref{sec:proofs}.

\section{Setting \label{sec:prel}}

\subsection{Notation and conventions}

Let $\setX\subseteq\mathbb{R}^d$,  $\mathcal{B}(\setX)$ be the Borel $\sigma$-field on $\setX$ and $\mathcal{P}(\setX)$ be the set of all probability measures on $(\setX, \mathcal{B}(\setX))$. We write $\mathcal{F}(\setX)$ the set of all Borel measurable functions on $\setX$ and, for $\varphi\in \mathcal{F}(\setX)$,   $\varphi^*=\sup_{\bx\in\setX}\varphi(\bx)$. For integers $b\geq a$  we use the shorthand $a:b$ for the set $\{a,\dots,b\}$ and, for a vector  $\bx=(x_1,\dots, x_d)\in\mathbb{R}^d$, $x_{i:j}=(x_i,\dots,x_j)$ where $ i\leq j\in 1:d$. Similarly, for integer $n\geq 1$, we write $\bx^{1:n}$ the set $\{\bx^1,\dots,\bx^n\}$ of $n$ points in $\mathbb{R}^d$. The ball of radius $\delta>0$ around $\tilde{x}\in\setX$ is denoted in what follows by
$$
B_{\delta}(\tilde{x})=\{x\in\setX:\,\|x-\tilde{x}\|_{\infty}\leq\delta\}
$$
where, for any $z\in\mathbb{R}^d$, $\|z\|_\infty=\max_{i\in 1:d}|z_i|$.

Next, for a Markov kernel $K_n$ acting from  $(\setX, \mathcal{B}(\setX))$  to itself and a point $\bx\in\setX$, we write $F_{K_n}(\bx,\cdot):\setX\rightarrow [0,1]^d$ (resp. $F^{-1}_{K}(\bx,\cdot):[0,1]^d\rightarrow\setX$) the Rosenblatt transformation (resp. the pseudo-inverse Rosenblatt transformation) of the probability measure $K(\bx,\dd y)$; see \citet{Rosenblatt1952} for a definition of these two notions. We denote by $K_{n,i}(\bx,y_{1:i-1}, \dd y_i)$, $i\in 1:d$, the distribution of $y_i$ conditional on $y_{1:i-1}$, relative to $K_n(\bx,\dd y)$ (with the convention $K_{n,i}(\bx,y_{1:i-1}, \dd y_i)=K_{n,1}(\bx, \dd y_1)$ when $i=1$) and the corresponding density  function is denoted by $K_{n,i}(y_i|y_{1:i-1},\bx)$ (again with the convention $K_{n,i}(y_i|y_{1:i-1},\bx)=K_{n,1}(y_1|\bx)$ when $i=1$).

Lastly, we use the shorthand  $\Omega=\ui^{\mathbb{N}}$ and   $\P=\lambda_1^{\otimes\mathbb{N}}$, with $\lambda_d$   the Lebesgue measure on $\mathbb{R}^d$, and consider below the probability space $(\Omega,\F,\P)$ where $\F$ denotes the Borel $\sigma$-algebra on $\Omega$. Throughout this work  we use the convention $\mathbb{N}=\{0,1,\dots\}$ and  the notation $\mathbb{N}_{>0}=\mathbb{N}\setminus\{0\}$, $\bar{\mathbb{N}}=\mathbb{N}\cup\{\infty\}$ and $\mathbb{R}_{>0}=(0,+\infty)$.
%Lastly, in the following, for a measurable mapping $Z:\ui^{\mathbb{N}}\rightarrow\mathbb{R}^d$, we will abuse notation and write, 
%$$
%\P(Z\in A)=\P\big(\omega \in \ui^{\mathbb{N}}:\, Z(\omega)\in A\big),\quad  A\in\mathcal{B}(\setX).
%$$

\subsection{Simulated annealing algorithms\label{sub:SA}}

Let $(K_n)_{n\geq 1}$ be a sequence of Markov kernels acting from $(\setX,\mathcal{B}(\setX))$ to itself and $(T_n)_{n\geq 1}$ be a sequence in $\mathbb{R}_{>0}$. Then, for $\varphi\in\mathcal{F}(\setX)$, let  $\phi_{\varphi,n}:\setX\times \ui^{d+1}\rightarrow\setX$ be the mapping defined, for  $(\bx,\bu)\in \setX\times \ui^{d+1}$, by
\begin{equation}\label{eq:phi}
\phi_{\varphi,n}(\bx,\bu)=
\begin{cases}
\by_n(\bx,\bu_{1:d})&u_{d+1}\leq A_n(\bx,\bu_{1:d})\\
\bx&u_{d+1}> A_n(\bx,\bu_{1:d})
\end{cases}
\end{equation}
where $\by_n(\bx,\bu_{1:d})=F_{K_n}^{-1}(\bx,\bu_{1:d})$ and where
$$
A_n(\bx,\bu_{1:d})=\exp\Big\{\big(\varphi\circ \by_n(\bx,\bu_{1:d})-\varphi(\bx)\big)/T_n\Big\}\wedge 1.
$$
Next, for $n\geq 1$, we recursively define the mapping $\phi_{\varphi,1:n}:\setX\times\ui^{n(d+1)}\rightarrow\setX$ as
\begin{equation}\label{eq:phi_n}
\phi_{\varphi,1:1}\equiv \phi_{\varphi,1},\,\,\, \phi_{\varphi,1:n}(\bx,\bu^{1:n})=\phi_{\varphi,n}\big(\phi_{\varphi,1:(n-1)}(\bx,\bu^{1:(n-1)}),\bu^n\big),\,\, n\geq 2.
\end{equation}

The quantity $\phi_{\varphi,n}(\bx,\bu)$ ``corresponds'' to the $n$-th iteration of a SA algorithm designed to maximize $\varphi$ where, given the current location $\bx^{n-1}=\bx$, a candidate  value $\by^n=\by_n(\bx,\bu_{1:d})$ is generated using the distribution $K_n(\bx,\dd\by)$ on $\setX$ and is accepted if $u_{d+1}$ is ``small'' compared to $A(\bx,\bu_{1:d})$. Note that the $n$-th value generated by a SA algorithm
with  starting point $\bx^0\in\setX$ and input sequence $(\bu^n)_{n\geq 1}$ in $\ui^{d+1}$ is given by $\bx^n= \phi_{\varphi,1:n}(\bx^0,\bu^{1:n})$.

\subsection{A general class of derandomized SA algorithms\label{sub:SA_R}}

If standard SA algorithms take for input  i.i.d.\ uniform random numbers, the above presentation of this optimization technique outlines the fact that other input sequences can be used. In particular, and as illustrated in \citet{QMC-SA}, the use of $(t,s)_R$-sequences  can lead to dramatic improvements compared to plain Monte Carlo SA algorithms. 

Before introducing $(t,s)_R$-sequences (Definition \ref{def:R} below) we first need to recall the definition of $(t,s)$-sequences \citep[see][Chapter 4, for a detailed presentation of these latter]{dick2010digital}.

For integers $b\geq 2$ and $s\geq 1$, let 
$$
\mathcal{E}_s^b=\Big\{\prod_{j=1}^s\big[a_j b^{-d_j},(a_j+1)b^{-d_j}\big)\subseteq \ui^s,\, a_j,\,d_j\in\mathbb{N},\, a_j< b^{d_j},
\, j\in 1:s\Big\}
$$
be the  set of all $b$-ary boxes (or elementary intervals in base $b$) in $\ui^s$. 

Next, for integers $m\geq 0$ and $0\leq t\leq m$, we say that the set $\{\bu^n\}_{n=0}^{b^m-1}$ of $b^m$ points in $\ui^s$ is  a $(t,m,s)$-net in base $b$ if every $b$-ary box $E\in \mathcal{E}_s^b$ of volume $b^{t-m}$ contains exactly $b^t$ points of the point set $\{\bu^n\}_{n=0}^{b^m-1}$, while the sequence $(\bu^n)_{n\geq 0}$ of points in $\ui^s$ is  called a  $(t,s)$-sequence in base $b$ if, for any integers $a\geq 0$ and $m\geq  t$, the set  $\{\bu^n\}_{n=ab^m}^{(a+1)b^m-1}$ is a $(t,m,s)$-net in base $b$.

\begin{definition}\label{def:R}
Let $b\geq 2$, $t\geq 0$, $s\geq 1$  be integers. Then, we say that  the random sequence $(\bbu_R^n)_{n\geq 0}:\Omega\rightarrow\Omega^s$ of points in $\ui^{s}$  is a $(t,s)_R$-sequence in base $b$, $R\in \bar{\mathbb{N}}$, if, for all $n\geq 0$ and $\omega\in\Omega$ (using the convention that empty sums are null),
$$
\bbu_R^n(\omega)=\big(U_{R,1}^n(\omega),\dots,U_{R,s}^n(\omega)\big),\quad U_{R,i}^n(\omega)=\sum_{k=1}^{R}a_{ki}^nb^{-k}+b^{-R} \omega_{ns+i},\quad i\in 1:s
$$
 where the digits $a_{ki}^n$'s in $0:(b-1)$ are such that  $(u_{\infty}^n)_{n\geq 0}$ is a $(t,s)$-sequence in base $b$. 
\end{definition}
Note that the notation $(u_{\infty}^n)_{n\geq 0}$ has been used instead of $(U_{\infty}^n)_{n\geq 0}$ since this sequence is deterministic.

As already mentioned, when $R=0$ the sequence $(\bbu_R^n)_{n\geq 0}$  reduces to a  sequence of i.i.d.\ uniform random numbers in $\ui^s$. Remark also that the sequence $(\bbu_R^n)_{n\geq 0}$ is such that $\bbu_R^n$ is uniformly distributed into one of the $b^{sR}$ hypercubes that partition $\ui^s$, where the position of that hypercube  depends only on the deterministic part of $\bbu_R^n$.  In addition, for any $R\geq t$, $a\in\mathbb{N}$ and $m\in t:R$, the point set $\{\bbu_R^n\}_{n=ab^m}^{(a+1)b^m-1}$ is a $(t,m,s)$-net in base $b$. 

To simplify the presentation,  Definition \ref{def:R} assumes that  $(t,s)_R$-sequences are constructed from a deterministic $(t,s)$-sequence; that is, it is assumed that the  $a_{ki}^n$'s in $0:(b-1)$ are deterministic. However, all the results presented below also hold for $(t,s)_R$-sequences  build on scrambled $(t,s)$-sequences \citep{Owen1995}. We recall that a scrambled $(t,s)$-sequence in base $b$ is a random sequence $(U^n)_{n\geq 0}$  such that  1) $(U^n)_{n\geq 0}$ is a $(t,s)$-sequence (in base $b$) with probability one and 2) $U^n\sim\Unif(0,1)^s$ for all $n\geq 0$. In that case, it is worth noting that, for any $R\in\mathbb{N}_{>0}$, the $\bbu_R^n$'s are no longer independent and, in particular, the process $(\bbu_R^n)_{n\geq 0}$ is not even Markovian.

The rational for replacing i.i.d.\ uniform random numbers by points taken from a $(t,s)_R$-sequence is explained in detail and illustrated in \citet{QMC-SA}. Here, we recall briefly the two main arguments. First, the  deterministic structure of   $(t,s)_R$-sequences leads to a SA algorithm which is much more robust to the tuning sequences $(K_n)_{n\geq 1}$ and $(T_n)_{n\geq 1}$ than plain Monte Carlo SA. This characteristic is particularly important since it is well known that, for a given objective function $\varphi\in\mathcal{F}(\setX)$ and sequence of kernels $(K_n)_{n\geq 1}$, the performance of SA is very sensitive to the choice of $(T_n)_{n\geq 1}$ \citep[see e.g. the numerical results in][]{QMC-SA}. Second, $(t,s)$-sequences are optimal in term of  dispersion which, informally speaking, means that they efficiently fill the unit hypercube and hence enhance the exploration of the state space \citep[see][Chapter 6, for more details on the notion of dispersion]{Niederreiter1992}.

\section{Consistency of  time-varying SA algorithms\label{sec:Main}}

In this section we provide almost sure  convergence results for the general class of time-varying SA algorithms described in Section \ref{sub:SA} and in Section \ref{sub:SA_R}. In Section \ref{sub:resSA} we separately study  the case $R=0$ (i.e. plain Monte Carlo SA algorithms) which requires the fewest assumptions. Then, we provide in Section \ref{sub:Main_SA}  a result that holds for any $R\in\mathbb{N}$ when $d\geq 1$ and show that, when $d=1$, this latter also holds for the limiting case $R=\infty$.

\subsection{Consistency of adaptive Monte Carlo SA\label{sub:resSA}}

The following result constitutes, to the best of our knowledge, the first almost sure convergence theorem for SA based on a Markov kernel that shrinks over time.

\begin{thm}\label{thm:timConvSA}
Let $\setX\subset\mathbb{R}^d$ be a bounded measurable set and assume that $(K_n)_{n\geq 1}$ satisfies the following conditions
\begin{itemize}
\item for all $n\geq 1$ and $x\in\setX$, $K_n(x,\dd y)=K_n( y| x)\lambda_d(\dd y)$, where $K_n(\cdot |\cdot)$  is continuous on $\setX^2$ and such that $K_n(\by|\bx) \geq \underline{K}_n>0$ for all $(\bx,\by)\in \setX^2$;
\item the sequence $(\underline{K}_n)_{n\geq 1}$ satisfies $\sum_{n=1}^{\infty}\underline{K}_n=\infty$.
\end{itemize} 
Let $\varphi\in\mathcal{F}(\setX)$ be such that there exist  a $\bx^*\in\setX$ satisfying  $\varphi(\bx^*)=\varphi^*$ and  a $\delta_0>0$ such that $\varphi$ is continuous on  $B_{\delta_0}(\bx^*)\subset\setX$. Then, if $\sum_{n=1}^{\infty}T_{n}\log(n)<\infty$, we have, for all $\bx^0\in\setX$,
$$
\lim_{n\rightarrow\infty}\varphi\Big(\phi_{\varphi,1:n}\big(\bx_0,\bbu_0^{1:n}(\omega)\big)\Big)\rightarrow\varphi^*,\quad \P\text{-a.s.}
$$

\end{thm}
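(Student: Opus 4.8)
The plan is to track the value sequence $V_n \eqdef \varphi(X^n)$, where $X^n \eqdef \phi_{\varphi,1:n}(\bx_0,\bbu_0^{1:n})$ is the chain driven by the i.i.d.\ uniform input $(\bbu_0^n)_{n\geq 1}$, working with the natural filtration $\F_n = \sigma(\bbu_0^1,\dots,\bbu_0^n)$. Since $V_n \leq \varphi^*$ for every $n$, it suffices to prove $V_n \to \varphi^*$ a.s. Writing $D_n \eqdef (V_{n-1}-V_n)^+$ for the size of the downward move at step $n$, so that $V_n \geq V_{n-1}-D_n$ and hence $V_n \geq V_m - \sum_{k=m+1}^n D_k$ for $m\leq n$, the strategy rests on two facts: (i) the cumulative downward movement $\sum_{n\geq 1} D_n$ is finite a.s.; and (ii) for every $\eta>0$ the value comes within $\eta$ of $\varphi^*$ infinitely often. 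Granting both, fix $\epsilon>0$: by (i) choose a random $N$ with $\sum_{n>N}D_n<\epsilon/2$, and by (ii) choose $m>N$ with $V_m>\varphi^*-\epsilon/2$; then for every $n\geq m$ we get $V_n \geq V_m - \sum_{k=m+1}^n D_k > \varphi^*-\epsilon$, which together with $V_n\leq\varphi^*$ yields $|V_n-\varphi^*|<\epsilon$ and proves the claim.

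For (i), I would condition on $\F_{n-1}$ and use that, given $X^{n-1}=\bx$, the proposal $\by^n$ has law $K_n(\bx,\dd\by)$ while a move with $\varphi(\by^n)<\varphi(\bx)$ is accepted with probability $\exp\{(\varphi(\by^n)-\varphi(\bx))/T_n\}$. With $\Delta=\varphi(\bx)-\varphi(\by)>0$ this gives
$$
\E[D_n\mid\F_{n-1}] = \int_{\{\varphi(\by)<\varphi(\bx)\}} \Delta\, e^{-\Delta/T_n}\,K_n(\by|\bx)\,\lambda_d(\dd\by) \leq \frac{T_n}{e}\int_{\setX}K_n(\by|\bx)\,\lambda_d(\dd\by)=\frac{T_n}{e},
$$
using the elementary bound $\sup_{\Delta>0}\Delta e^{-\Delta/T_n}=T_n/e$. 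Since $\log n\geq 1$ for $n\geq 3$, the hypothesis $\sum_n T_n\log n<\infty$ forces $\sum_n T_n<\infty$, so $\E\big[\sum_n D_n\big]=\sum_n\E[D_n]\leq e^{-1}\sum_n T_n<\infty$ by Tonelli, whence $\sum_n D_n<\infty$ a.s. I note that this step needs only the summability of $(T_n)_{n\geq 1}$, the stronger condition $\sum_n T_n\log n<\infty$ being imposed for uniformity with Section \ref{sub:Main_SA}.

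For (ii), I would use continuity of $\varphi$ at $\bx^*$ to pick, for given $\eta>0$, a radius $\delta>0$ with $B_\delta(\bx^*)\subset B_{\delta_0}(\bx^*)$ and $\varphi(\by)>\varphi^*-\eta/2$ on $B_\delta(\bx^*)$. Let $A_n=\{\by^n\in B_\delta(\bx^*)\}$; this event is $\F_n$-measurable and, by the uniform lower bound on the transition density,
$$
\P(A_n\mid\F_{n-1}) = \int_{B_\delta(\bx^*)}K_n(\by| X^{n-1})\,\lambda_d(\dd\by) \geq \underline{K}_n\,\lambda_d\big(B_\delta(\bx^*)\big) =: c_\delta\,\underline{K}_n,
$$
with $c_\delta>0$. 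As $\sum_n \underline{K}_n=\infty$, the series $\sum_n\P(A_n\mid\F_{n-1})$ diverges surely, so the conditional (Lévy) Borel--Cantelli lemma gives $A_n$ i.o.\ a.s. On $A_n$, either $\varphi(\by^n)\geq V_{n-1}$, so the move is accepted and $V_n=\varphi(\by^n)>\varphi^*-\eta/2$, or $\varphi(\by^n)<V_{n-1}$, so $V_{n-1}>\varphi^*-\eta/2$; in both cases $\max(V_{n-1},V_n)>\varphi^*-\eta$, and since $A_n$ occurs infinitely often this gives (ii).

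The routine work is the measurability bookkeeping for $(\F_n)$ and the verification that the conditional law of $\by^n$ given $\F_{n-1}$ is $K_n(X^{n-1},\cdot)$, which is exactly the defining property of the pseudo-inverse Rosenblatt transformation feeding $\phi_{\varphi,n}$. The main obstacle I anticipate is step (ii): the recurrence to a neighbourhood of the optimum must be secured even though the kernel may shrink with $n$, and this is precisely where the conditional Borel--Cantelli lemma, powered by the divergence $\sum_n\underline{K}_n=\infty$, takes over the role played by a fixed minorization in the time-homogeneous theory. Some care is also needed to exploit only the continuity of $\varphi$ at $\bx^*$, which is why the whole argument is phrased in terms of proposals landing in the small ball $B_\delta(\bx^*)$.
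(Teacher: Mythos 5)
Your proof is correct, and although its skeleton (bound the cumulative descent, show the chain proposes points near $\bx^*$ infinitely often, combine) matches the paper's, the first step is carried out by a genuinely different argument. The paper obtains the a.s.\ convergence of $\varphi(X^n)$ by invoking Lemmas 4 and 5 of \citet{QMC-SA} together with Lemma~\ref{lem:R}: almost surely $U_{0,d+1}^n\geq n^{-(1+\alpha)}$ for all $n$ large, so every accepted downhill move at step $n$ has size at most $(1+\alpha)T_n\log n$, and the hypothesis $\sum_n T_n\log n<\infty$ controls the total descent \emph{pathwise}. You instead control it \emph{in expectation}, via $\E[D_n\mid\F_{n-1}]\leq\sup_{\Delta>0}\Delta e^{-\Delta/T_n}=T_n/e$ and Tonelli, which requires only $\sum_n T_n<\infty$ and no lower bound on the acceptance variable; your remark that the $\log n$ factor is superfluous in the Monte Carlo case is accurate --- it is retained in the paper only so that the same pathwise mechanism survives derandomization ($R\geq1$), where averaging over the acceptance step is no longer available. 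Your second step coincides with the paper's in substance: the paper lower-bounds $\P\big(Y_0^n\in B_\delta(\bx^*)\mid\cdot\big)$ by $\underline{K}_n\delta^d$ and deduces infinitely many visits from $\prod_n(1-\underline{K}_n\delta^d)=0$, which is precisely the conditional Borel--Cantelli statement you invoke in the form of L\'evy's extension (your formulation makes explicit the iterated conditioning that the paper's product computation leaves implicit). The final assembly also differs cosmetically: the paper first extracts a limit $\bar{\varphi}(\omega)$ of the value sequence and then rules out $\bar{\varphi}(\omega)<\varphi^*$, whereas you bypass the existence of a limit and argue directly from $V_n\geq V_m-\sum_{k=m+1}^n D_k$; both routes are valid.
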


\begin{proof}
Let $\varphi\in\mathcal{F}(\setX)$ be as in the statement  of the theorem and $\bx_0\in\setX$ be fixed,  and, for $(\omega,n)\in\Omega\times\mathbb{N}_{>0}$, let
$$
\bbx_0^n(\omega)=\phi_{\varphi,1:n}\big(\bx^0,\bbu_0^{1:n}(\omega)\big),\quad \bby_0^n(\omega)=\by_n\big(\bbx_0^{n-1}(\omega),\bbu_{0,1:d}^n(\omega)\big).
$$

Let $\alpha >0$ so that, by Lemma \ref{lem:R}, $\P$-a.s., $U_{0,d+1}^n(\omega)\geq n^{-(1+\alpha)}$ for all $n$ large enough. Therefore, under the assumptions of the theorem and by \citet[][Lemma 4 and Lemma 5]{QMC-SA}, for $\P$-a.s.,  there exists a  $\bar{\varphi}(\omega)\in\mathbb{R}$ such that
$$
\lim_{n\rightarrow\infty}\varphi(\bbx_0^n(\omega))=\bar{\varphi}(\omega).
$$
 To show that, $\P$-a.s., $\bar{\varphi}(\omega)=\varphi^*$, let $\bx^*$ and $\delta_0>0$ be as in the statement of the theorem and note that, for all $\delta\in(0,\delta_0)$ and for all $n\geq 1$,  
$$
\P\big(\bby_0^n \in B_{\delta}(\bx^*)|Y_0^1 ,\dots, Y_0^{n-1}\big)\geq \underline{K}_n\delta^d.
$$
  To conclude the proof, it remains to show that, for any  $\delta\in(0,\delta_0)$,
\begin{align}\label{eq:int_MC}
\prod_{n=1}^{\infty}(1-\underline{K}_n\delta^d)=0.
\end{align}
Indeed, assuming \eqref{eq:int_MC} is true, for $\P$-almost all $\omega\in\Omega$, the set $B_{\delta}(\bx^*)$ is visited, for any $\delta\in(0,\delta_0)\cap\mathbb{Q}$,  infinitely many times by the sequence $(\bby_0^n(\omega))_{n\geq 1}$ and  therefore the result follows from the continuity of $\varphi$ around $\bx^*$.

To show \eqref{eq:int_MC}, simply note that, using the inequality $\log(1+x)\leq x$ for all $x>-1$ and the continuity of the mapping $x\mapsto \exp(x)$, one has under the assumptions on $(K_n)_{n\geq 1}$,
\begin{align*}
\prod_{n=1}^{\infty}(1-\underline{K}_n\delta^d)=\lim_{N\rightarrow\infty}\exp\bigg\{\sum_{n=1}^{N}
\log(1-\underline{K}_n\delta^d)\bigg\}&=\exp\bigg\{\sum_{n=1}^{\infty}
\log(1-\underline{K}_n\delta^d)\bigg\}\\
&\leq \exp\bigg\{-\delta^d\sum_{n=1}^{\infty}
\underline{K}_n\bigg\}\\
&=0.
\end{align*}
\end{proof}

\begin{rem}
The assumption on  $(T_n)_{n\geq 1}$ comes from \citet[][Theorem 1]{QMC-SA} and is independent from the choice of the  Markov kernels $(K_n)_{n\geq 1}$. We refer the reader to that reference for a discussion on how the condition $\sum_{n=1}^{\infty}T_{n}\log(n)<\infty$ compared to common assumptions on $(T_n)_{n\geq 1}$ that can be found in the literature.
\end{rem}

\begin{rem}
This result is obviously independent of the way we sample from the Markov kernel $K_n(\bx,\dd\by)$ and thus remains valid when we do not use the inverse Rosenblatt transformation approach.
\end{rem}

\begin{rem}
If, for all $n\geq 1$,  $K_n=K$ for a Markov kernel $K$ acting from $(\setX, \mathcal{B}(\setX))$ to itself, then Theorem \ref{thm:timConvSA} reduces to  \citet[][Theorem 3]{QMC-SA}.
\end{rem}

\subsection{Consistency  of derandomized  adaptive SA \label{sub:Main_SA}}

When $R\in\mathbb{N}_{>0}$ the study of the stochastic process generated by the SA algorithm described in Section \ref{sub:SA_R} is more challenging due  to its deterministic underlying  structure. Consequently,  additional assumptions on the objective function and on the sequence $(K_n)_{n\geq 1}$ are needed. However, and as illustrated in Section \ref{sec:ill}, these latter turn out to be, for standard choices of Markov kernels, no stronger  than those needed to establish Theorem \ref{thm:timConvSA}.

\subsubsection{Assumptions and additional notation\label{sub:more_notation}}

For integer $b\geq 2$ and $n\in\mathbb{N}_{>0}$, we write $k_n$ and $r_n$ the integers satisfying
$$
b^{k_n-1}\leq n<b^{k_n},\quad (r_n-1)b^{dR+t}\leq n<r_nb^{dR+t}
$$ 
and we recursively  define the sequence $(k_{R,m})_{m\geq 0}$ in $\mathbb{N}_{>0}$  as follows:
$$
k_{R,0}=1,\quad k_{R,m}=\inf_{n\geq 1}\{b^{k_n}\wedge r_nb^{dR+t}:\,b^{k_n}\wedge r_nb^{dR+t}>k_{R,m-1}\}.
$$
As explained in Section \ref{sub:discussion}, this sequence  is used to determine the frequency  we can adapt the Markov kernel (Assumption \ref{H:thm2_K1} below).

%Note that, for $n$ large enough, $k_{R,n}=r_{m_n}b^{dR+t}$ for some $m_n\in\mathbb{N}$ and, thus, if $(U^n_R)_{n\geq 1}$ is a $(t,d)_R$-sequence in base $b\geq 2$, for any $n\geq 1$ large enough, the point set $\big\{U^i_{R}\big\}_{i=k_{R,n}}^{k_{R,n}-1}$  contains exactly $b^t$ points in each of the $b^{dR}$ hypercubes of volume  $b^{-dR}$ that partition $\ui^d$. Hence, informally speaking, the sequence $(k_{R,m})_{m\geq 0}$ identifies sets of consecutive elements of the sequence  $(U^n_R)_{n\geq 1}$ that cover the whole hypercube $\ui^d$. As explained below, this sequence  is used to determine the frequency at which we can adapt the Markov kernel.

Next, we denote by $\setX_l$, $l\in\mathbb{R}$,  the level sets of $\varphi$; that is
$$
\setX_l=\{x\in\setX:\varphi(x)=l\},\quad l\in\mathbb{R}.
$$
Lastly, we recall  the  definition of the Minkovski content of a set that will be used to impose some smoothness on the objective function.

\begin{definition}
A measurable set $A\subseteq\setX$  has a $(d-1)$-dimensional Minkovski content if
$M(A):=\lim_{\epsilon \downarrow 0}\epsilon^{-1}\lambda_d\big((A)_{\epsilon}\big)<\infty$, 
where, for $\epsilon>0$,  we use the shorthand 
$$
(A)_{\epsilon}:=\{\bx\in\setX:\exists \bx'\in A, \|\bx-\bx'\|_{\infty}\leq\epsilon\}.
$$
\end{definition}

We shall consider the following assumptions on  $\setX$, $(U_R^n)_{n\geq 0}$, $(K_n)_{n\geq 1}$ and $\varphi\in\mathcal{F}(\setX)$.

\begin{enumerate}[label=(\subscript{A}{\arabic*})]
\item\label{H:X} $\setX=[0,1]^d$;
\end{enumerate}

\begin{enumerate}[label=(\subscript{B}{\arabic*})]
\item\label{H:R1} $(\bu_{\infty,1:d}^n)_{n\geq 0}$ is a $(t,d)_R$-sequence;
\item\label{H:R2} $(u_{\infty,d+1}^n)_{n\geq 0}$ is a $(0,1)$-sequence with $u_{\infty,d+1}^0=0$;
\end{enumerate}

\begin{enumerate}[label=(\subscript{C}{\arabic*})]

 \item\label{H:thm2_K1} $K_n=K_{k_{R,m_n}}$ for all $n\in (k_{R,m_n-1}):k_{R,m_n}$ and for a $m_n\in\mathbb{N}_{>0}$;
\item\label{H:K1}  Under \ref{H:X}, for a fixed $\bx\in\setX$, the $i$-th component of  $F_{K_n}(\bx,\by)$ is strictly increasing in $y_i\in[0,1]$, $i\in 1:d$;
\item\label{H:K2} The Markov kernel $K_n(\bx,\dd \by)$ admits a continuous density function $K_n(\cdot|\cdot)$  (with respect to the Lebesgue measure) such that, for a constant $\tilde{K}_n>0$,  
$$
\inf_{(\bx,\by)\in\setX^2}K_{n,i}(y_i|\bx, y_{1:i-1})\geq\tilde{K}_n,\quad\forall i\in 1:d;
$$
%\item\label{H:K3} Under \ref{H:X}, there exist  an $\epsilon\geq 0$ and constants $C_{n}>0$ such that, for any $\delta_0\in (0,1-\epsilon)$, there exist constants $C_{n,\delta_0}>0$ and $\bar{K}_{n,\delta_0}<\infty$ such that, for all $(\tilde{\bx},\bx')\in [\epsilon+\delta_0, 1-\epsilon-\delta_0]^2$  which verifies  $\lambda_d\big(B_{\delta_0}(\tilde{x})\cap B_{\delta_0}(x')\big)=0$,   we have, $\forall \delta\in (0,\delta_0/2]$ and  $\forall (\bx,\by)\in B_{\delta_0}(\tilde{\bx})\times  B_{\delta}(\bx')$,
%$$
%\|F_{K_{n}}(\tilde{\bx}, \bx')-F_{K_{n}}(\bx, \by)\|_{\infty}\leq \delta\,C_{n}.
%$$
%In addition, for all $i\in 1:d$, $
%K_{n,i}(y_i|y_{1:i-1},\bx)\leq \bar{K}_{n,\delta_0}$;

\item\label{H:K3} There exists a constant $C_{n}<\infty$ such that, for any $\delta_0>0$ and for all $(\tilde{\bx},\bx')\in \setX^2$  which satisfies  $\lambda_d\big(B_{2\delta_0}(\tilde{x})\cap B_{2\delta_0}(x')\big)=0$,   we have, $\forall \delta\in (0,\delta_0]$ and  $\forall (\bx,\by)\in B_{\delta}(\tilde{\bx})\times  B_{\delta}(\bx')$,
$$
\|F_{K_{n}}(\tilde{\bx}, \bx')-F_{K_{n}}(\bx, \by)\|_{\infty}\leq \delta\,C_{n}.
$$
In addition, there exists a constant $\bar{K}_{n,\delta_0}<\infty$ such that, for all $i\in 1:d$, $
K_{n,i}(y_i|y_{1:i-1},\bx)\leq \bar{K}_{n,\delta_0}$;

\item\label{H:K4} The sequences $(\tilde{K}_n)_{n\geq 1}$, $(C_{n})_{n\geq 1}$ and $(\bar{K}_{n,\delta_0})_{k\geq 1}$,  defined in \ref{H:K2}-\ref{H:K3}, are bounded and such that 
$$
n^{-1/d}/\tilde{K}_n=\bigO(1),\quad  C_{n}/\tilde{K}_{n}=\bigO(1),\quad\bar{K}_{n,\delta_0}=\smallo(1).
$$
\end{enumerate}

\begin{enumerate}[label=(\subscript{D}{\arabic*})]
\item\label{H:thm2_K4} The function $\varphi$ is continuous on $\setX$ and such that 
$$
\sup_{x\in \setX:\varphi(x)<\varphi^*}M(\setX_{\varphi(\bx)})<\infty.
$$
\end{enumerate}

\subsubsection{Discussion of the assumptions\label{sub:discussion}}

Condition \ref{H:X} requires that $\setX=[0,1]^d$ but all the results presented below under  \ref{H:X} also hold when $\setX$ is an arbitrary closed hypercube.

Assumptions \ref{H:R1} and \ref{H:R2} on the input sequence are very weak and are  for instance fulfilled when $(\bu_{\infty}^n)_{n\geq 0}$ is a $(d+1)$-dimensional Sobol' sequence \citep[see, e.g.,][Chapter 8, for a definition]{dick2010digital}.

Assumption \ref{H:thm2_K1} imposes a restriction on the frequency we can adapt the Markov kernel  $K_n$. In particular,  the bigger $R$ is, the less frequently we can change $K_n$. To understand this condition  note that,  for $n$ large enough, we have $k_{R,n}=\tilde{r}_{n}b^{dR+t}$ for some $\tilde{r}_{n}\in\mathbb{N}$. Therefore, for $R\geq t$, 
the point set $\big\{U^i_{R, 1:d}\big\}_{i=k_{R,n}}^{k_{R,n}-1}$  contains exactly $b^t$  points in each of the $b^{dR}$ hypercubes of volume  $b^{-dR}$ that partition $\ui^d$, each of these points being independent and uniformly distributed on the corresponding hypercube of volume  $b^{-dR}$ (see Section \ref{sub:SA_R}). Consequently, and under the other assumptions, the sequence $\big\{F_{K_n}^{-1}(\bx,\bbu_{R,1:d}^i)\big\}_{i=k_{R,n}}^{k_{R,n}-1}$ can reach any region of $\setX$ having positive Lebesgue measure  with strictly positive probability. On the contrary,  if we change the kernel too often this last property may not hold and  the algorithm may fail to converge.

%Assumption \ref{H:thm2_K1} imposes a restriction on the frequency we can adapt the Markov kernel  $K_n$. In particular,  the bigger $R$ is, the less frequently we can change $K_n$. To understand this recall that each point of the sequence $(\bbu_{R}^n)_{n\geq 0}$ is deterministically assigned to one of the $b^{dR}$ hypercubes of volume $b^{-dR}$ that partition $[0,1]^d$. Hence, for a given $\bx\in\setX$, if we change the kernel too often the sequence $F_{K_n}^{-1}(\bx,\bbu_{R,1:d}^n)$ may intuitively fail to fill completely the state space $\setX$.
Condition \ref{H:K1}  amounts to assuming that, for any $\bx\in\setX$ and $n\geq 1$, the inverse Rosenblatt transformation $F_{K_n}^{-1}(\bx,\cdot)$ is a well defined function. Given \ref{H:X} and \ref{H:K1}, \ref{H:K2} simply amounts to requiring that, for all $\bx\in\setX$ and $n\geq 1$, the distribution $K_n(\bx,\dd \by)\in\mathcal{P}(\setX)$ is absolutely continuous with respect to the Lebesgue measure and that, for any $\by\in\setX$,  $K_n(y|\cdot )$ is continuous on $\setX$. Next, \ref{H:K3} and \ref{H:K4} impose some conditions on the tail behaviour of $K_n$ as $n\rightarrow \infty$. As illustrated in Section \ref{sec:ill}, \ref{H:K3} and \ref{H:K4} are quite weak and are easily verified for standard choices of Markov kernels.

Lastly,  Assumption \ref{H:thm2_K4} on the objective function $\varphi\in\mathcal{F}(\setX)$ is the same as in \citet{QMC-SA} and is inspired from \citet{He2014}.

% Finally, the first part of \ref{H:thm2_K4} is used to rule out boundary issues that can arise from an objective function $\varphi$ that attains its maximum on the boundary of $\setX$. In practice,  this assumption is not restrictive since we can always slightly increase $\setX$ and expand $\varphi$ outside the original state space in a way which is coherent with \ref{H:thm2_K4} .

\subsubsection{Main results}

The following   theorem   establishes the consistency of SA based on $(t,s)_R$-sequences for any $R\in\mathbb{N}$.

\begin{thm}\label{thm:timeConv}

Assume \ref{H:X}-\ref{H:thm2_K4} and let $(T_n)_{n\geq 1}$ be  such that $\sum_{n=1}^{\infty}T_{n}\log(n)<\infty$. Then, for all $R\in\mathbb{N}$ and for all $\bx^0\in\setX$,
$$
\lim_{n\rightarrow\infty}\varphi\big(\phi_{\varphi,1:n}(\bx_0,\bbu_R^{1:n}(\omega))\big)\rightarrow\varphi^*,\quad\P\text{-a.s.}
$$
\end{thm}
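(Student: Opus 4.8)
The plan is to mirror the two-stage structure of the proof of Theorem~\ref{thm:timConvSA}. Writing $\bbx_R^n(\omega)=\phi_{\varphi,1:n}(\bx_0,\bbu_R^{1:n}(\omega))$ and $\bby_R^n(\omega)=\by_n(\bbx_R^{n-1}(\omega),\bbu_{R,1:d}^n(\omega))$ for the successive states and proposals, and letting $\F_{n-1}=\sigma(\bbx_R^{0},\dots,\bbx_R^{n-1})$, I would first show that $\varphi(\bbx_R^n)$ converges $\P$-a.s.\ to some $\bar\varphi(\omega)\in\R$, and then argue that $\bar\varphi=\varphi^*$ almost surely. By \ref{H:thm2_K4} and compactness of $\setX$ a maximiser $\bx^*$ with $\varphi(\bx^*)=\varphi^*$ exists and $\varphi$ is continuous at $\bx^*$, so the second stage reduces to a statement about how often the proposals revisit neighbourhoods of $\bx^*$.

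\textbf{Step 1 (existence of a limit).} Using \ref{H:R2} and Lemma~\ref{lem:R} I would fix $\alpha>0$ such that, $\P$-a.s., $U_{R,d+1}^n\geq n^{-(1+\alpha)}$ for all $n$ large enough. Since an accepted move can decrease $\varphi$ by at most $-T_n\log U_{R,d+1}^n\leq (1+\alpha)T_n\log n$, the assumption $\sum_n T_n\log n<\infty$ makes the total downward variation of $(\varphi(\bbx_R^n))_n$ summable; combined with $\varphi\leq\varphi^*<\infty$ this forces convergence. This is precisely the content of \citet[][Lemma 4 and Lemma 5]{QMC-SA}, whose hypotheses are the cooling condition and the lower bound on the acceptance coordinate only, and hence apply verbatim for every $R\in\N$.

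\textbf{Step 2 (the main step: $\bar\varphi=\varphi^*$).} The heart of the argument is to show that, for each $\delta\in(0,\delta_0)\cap\Q$, the set $B_\delta(\bx^*)$ contains infinitely many proposals $\bby_R^n$, $\P$-a.s.; granted this, continuity of $\varphi$ at $\bx^*$ together with the fact that an improving proposal is always accepted ($A_n=1$) yields $\varphi(\bbx_R^n)>\varphi^*-\eta$ infinitely often for every $\eta>0$, whence $\bar\varphi=\varphi^*$. The decisive structural fact, coming from Definition~\ref{def:R}, is that conditionally on $\F_{n-1}$ the vector $\bbu_{R,1:d}^n$ is uniformly distributed on a \emph{deterministic} sub-hypercube $Q^n$ of volume $b^{-dR}$ (its location fixed by the digits of the underlying $(t,d)$-sequence), so that, by \ref{H:K1}--\ref{H:K2},
$$
\P\big(\bby_R^n\in B_\delta(\bx^*)\,\big|\,\F_{n-1}\big)=b^{dR}\,\lambda_d\big(F_{K_n}(\bbx_R^{n-1},B_\delta(\bx^*))\cap Q^n\big),
$$
while the target set $F_{K_n}(\bbx_R^{n-1},B_\delta(\bx^*))$ has Lebesgue measure at least $(2\delta\tilde{K}_n)^d$. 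By \ref{H:thm2_K1} the kernel is held fixed on each block $(k_{R,m-1}):k_{R,m}$, on which the $Q^n$ sweep all $b^{dR}$ sub-hypercubes exactly $b^t$ times (the net property of \ref{H:R1}); summing the conditional probabilities across a block and invoking the lower bound $\tilde{K}_n\gtrsim n^{-1/d}$ from \ref{H:K4} gives a per-block contribution of order $m^{-1}(2\delta)^d$. Hence $\sum_n\P(\bby_R^n\in B_\delta(\bx^*)\mid\F_{n-1})=\infty$ $\P$-a.s., and the conditional Borel--Cantelli lemma delivers the required infinitely many visits.

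\textbf{Main obstacle.} The delicate point is that within a block the state $\bbx_R^{n-1}$, and therefore the target region $F_{K_n}(\bbx_R^{n-1},B_\delta(\bx^*))$, moves from one step to the next, so the clean block-sum lower bound above (exact for a frozen state) must be shown to survive this movement. This is exactly what Assumption~\ref{H:K3} is designed for: the Lipschitz control $\|F_{K_n}(\tilde{\bx},\bx')-F_{K_n}(\bx,\by)\|_\infty\leq\delta C_n$ localises the target region, and the ratio bound $C_n/\tilde{K}_n=\bigO(1)$ of \ref{H:K4} guarantees that its displacement is small relative to its own diameter, so that the net proposals genuinely cover $B_\delta(\bx^*)$ over the block rather than chasing a moving target. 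The continuity of $\varphi$ on all of $\setX$ and the Minkowski-content bound of \ref{H:thm2_K4} enter here through the \citet{QMC-SA} machinery, ensuring that the deterministic equidistribution of the net is compatible with the level-set geometry of $\varphi$ and thus translates coverage in $\setX$ into the desired behaviour of $\varphi(\bbx_R^n)$.
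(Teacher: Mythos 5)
Your Step 1 and the overall two-stage skeleton match the paper, and you have correctly located the crux: within a block $I_m$ the state, and hence the target region $F_{K_n}(\bbx_R^{n-1}, B_\delta(\bx^*))$, moves while the deterministic cells $Q^n$ sweep $\ui^d$. The gap is in how you dispose of this obstacle. The Lipschitz control of \ref{H:K3} together with $C_n/\tilde K_n=\bigO(1)$ only bounds the displacement of the target region when the state itself moves by $\bigO(\delta)$; it says nothing when the chain makes a large move to a different part of the level set $(\setX_{\bar\varphi(\omega)})_{\epsilon_p}$, which may have diameter of order one. In that case nothing prevents the deterministic positions of the $Q^n$ from systematically missing the (adapted, hence $\F_{n-1}$-measurable) target regions, so your block-sum lower bound $\sum_{n\in I_m}\P\big(\bby_R^n\in B_\delta(\bx^*)\mid\F_{n-1}\big)\gtrsim \delta^{d}/m$ is not justified and the conditional Borel--Cantelli argument does not go through. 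This is precisely the ``odd behaviour'' that distinguishes $R\in\mathbb{N}$ from the i.i.d.\ case and that the paper spends most of its effort excluding.

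The paper's proof inserts an additional, substantive step that your sketch lacks: Lemma \ref{lemma:A_set} shows that, almost surely, on all but finitely many blocks either the chain exits the $\epsilon_p$-neighbourhood of its current level set (which can happen only finitely often once $\varphi(\bbx_R^n)\to\bar\varphi(\omega)$) or it remains confined to a fixed hypercube of side $\delta_{p-\kappa}$ for the entire block. Only with this confinement in hand can Lemma \ref{lem:dense_TV} be applied with a frozen reference point, and Lemma \ref{lemma:tildeOmega} then supplies, via a product-of-probabilities argument over blocks, infinitely many input points forcing a proposal into $B_{\delta_{p'_{\omega,\gamma}}}(\bx^*)$, yielding the contradiction. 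Lemma \ref{lemma:A_set} is exactly where $\bar K_{n,\delta_0}=\smallo(1)$ and the Minkowski-content bound of \ref{H:thm2_K4} do their real work (they bound the probability of an intra-level-set escape per block by a constant $\rho<1$ uniformly in the block index); in your proposal these hypotheses are invoked only as entering ``through the \citet{QMC-SA} machinery'', which leaves the decisive step unproved.
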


\begin{rem}
The condition  $n^{-1/d}/\tilde{K}_n=\bigO(1)$ in \ref{H:K4} is typically equivalent to the condition $\sum_{n=1}^{\infty}\underline{K}_n=\infty$  given in Theorem \ref{thm:timConvSA} since, typically,  $\underline{K}_n\sim\tilde{K}_n^d$.
\end{rem}

The case $R=\infty$ is more challenging because some odd behaviours are difficult to exclude with  a completely deterministic input sequence. However, we manage to establish a convergence result for  deterministic time-varying SA when the state space is univariate. To this end, we however need to modify \ref{H:K4} and to introduce a new assumption on the sequence $(K_n)_{n\geq 1}$.

\begin{quote}
\begin{enumerate}[label=(\subscript{C'}{\arabic*})]
\setcounter{enumi}{4}
\item\label{H:K4b}  The sequences $(C_{n})_{n\geq 1}$, $(\bar{K}_{n,\delta_0})_{n\geq 1}$ and $(\tilde{K}_n)_{n\geq 1}$,  defined in \ref{H:K2}-\ref{H:K3}, are bounded and such that 
$$
n^{-1/d}/\tilde{K}_n=\smallo(1),\quad  C_{n}/\tilde{K}_{n}=\bigO(1),\quad\bar{K}_{n,\delta_0}=\smallo(1);
$$
\end{enumerate}
\begin{enumerate}[label=(\subscript{C}{\arabic*})]
\setcounter{enumi}{5}
\item \label{H:K5} The sequence $(\bar{K}_{n,\delta_0})_{n\geq 1}$ is such that $n^{-1/d}/\bar{K}_{n,\delta_0}=\smallo(1)$.
\end{enumerate}
\end{quote}

Under this new set of conditions we  prove the following result.

\begin{thm}\label{thm:timeConv_Univ}

Assume $d=1$,  $\sum_{n=1}^{\infty}T_{n}\log(n)<\infty$ and that \ref{H:X}- \ref{H:K3}, \ref{H:K4b}, \ref{H:K5},  \ref{H:thm2_K4}. Then,  for all $x_0\in \setX$, 
$$
\lim_{n\rightarrow\infty}\varphi\big(\phi_{\varphi,1:n}(\bx_0,u_{\infty}^{1:n})\big)\rightarrow\varphi^*.
$$
\end{thm}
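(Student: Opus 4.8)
The plan is to follow the two-stage template already used for Theorem~\ref{thm:timConvSA} and Theorem~\ref{thm:timeConv}: first show that the value sequence converges to some limit $\bar\varphi\le\varphi^*$, and then rule out $\bar\varphi<\varphi^*$ by a contradiction argument. Write $X^n=\phi_{\varphi,1:n}(x_0,u_\infty^{1:n})$ and let $Y^n=\by_n(X^{n-1},u_{\infty,1:d}^n)=F^{-1}_{K_n}(X^{n-1},u^n_{\infty,1:d})$ be the $n$-th proposal, so that $X^n\in\{X^{n-1},Y^n\}$ according to the acceptance rule \eqref{eq:phi}. Since $d=1$ here, each $F_{K_n}(x,\cdot)$ is a genuine increasing c.d.f.\ (by \ref{H:K1}) and each proposal interval is governed by the one-dimensional density bounds of \ref{H:K2}--\ref{H:K3}.

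For the first stage I would reproduce the convergence argument of \citet[][Lemmas 4 and 5]{QMC-SA}, which is insensitive to the choice of kernel and depends only on the acceptance mechanism. The one point to check in the fully deterministic setting is that the accepted downward moves are summable. Because $(u^n_{\infty,d+1})_{n\ge0}$ is a $(0,1)$-sequence with $u^0_{\infty,d+1}=0$ (\ref{H:R2}), Lemma~\ref{lem:R} supplies a deterministic lower bound $u^n_{\infty,d+1}\ge c\,n^{-(1+\alpha)}$; a move that strictly decreases $\varphi$ is accepted only when $u^n_{\infty,d+1}\le A_n$, which forces $\varphi(X^{n-1})-\varphi(X^n)\le T_n\,|\log u^n_{\infty,d+1}|\le T_n\big((1+\alpha)\log n+O(1)\big)$. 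Summability of these decrements then follows from $\sum_n T_n\log n<\infty$, and since $\varphi\le\varphi^*$ is bounded above, $\varphi(X^n)$ converges to a limit $\bar\varphi$.

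For the second stage, suppose for contradiction that $\bar\varphi<\varphi^*$. Continuity of $\varphi$ on $\setX$ (\ref{H:thm2_K4}) gives an $\epsilon>0$ and an interval $I\subset\setX$ of positive length with $\varphi>\bar\varphi+2\epsilon$ on $I$, while for all large $n$ one has $\varphi(X^n)<\bar\varphi+\epsilon$. The goal is to exhibit, infinitely often, a step at which some proposal lands in $I$: such a proposal satisfies $\varphi(Y^n)>\bar\varphi+2\epsilon>\varphi(X^{n-1})$ and is therefore accepted unconditionally (an upward move has $A_n=1$), which would push $\varphi(X^n)$ above $\bar\varphi+2\epsilon$ and contradict $\varphi(X^n)\to\bar\varphi$. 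Here I would exploit the block structure forced by \ref{H:thm2_K1}: when $R=\infty$ the sequence $(k_{R,m})_m$ reduces to the powers $b^m$, so the kernel is held fixed on each block $[b^{m-1},b^m)$, and on such a block the driving points $\{u^n_{\infty,1}\}$ form (unions of) $(t,m',1)$-nets whose dispersion is of order $b^{t}/n$. A proposal falls in $I$ precisely when $u^n_{\infty,1}$ lies in the interval $J_n=F_{K_n}(X^{n-1},I)$, whose length is at least $\tilde K_n|I|$ by the density lower bound \ref{H:K2}; so it suffices to find, within the block, an index whose net point lies in its own target interval.

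The main obstacle is exactly that $J_n$ is a \emph{moving} target---its position depends on the current state $X^{n-1}$---so the equidistribution of the net cannot be applied directly, and, unlike in the random case $R<\infty$, there is no probabilistic slack (Borel--Cantelli) to absorb this. I would resolve it using \ref{H:thm2_K4} and the one-dimensional Minkowski-content bound: the limiting level set $\setX_{\bar\varphi}$ is then a finite set of points, so the chain is eventually trapped in a finite union of small balls $B_\delta(p_j)$. On a contiguous run during which $X^{n-1}$ stays in a single ball $B_\delta(\tilde x)$, the Lipschitz-type control \ref{H:K3} shows that all the intervals $J_n$ contain a common sub-interval $J_\star$ of length at least $\tilde K_n|I|-2\delta C_n$; since $C_n/\tilde K_n=O(1)$ by \ref{H:K4b}, choosing $\delta$ small keeps $|J_\star|\gtrsim\tilde K_n|I|$, and the strengthened rates $n^{-1/d}/\tilde K_n=o(1)$ in \ref{H:K4b} together with $n^{-1/d}/\bar K_{n,\delta_0}=o(1)$ in \ref{H:K5}---which bound the net dispersion below $|J_\star|$ and control how often the shrinking kernel can relocate the chain between the balls $B_\delta(p_j)$---guarantee, for $m$ large, a contiguous run long enough to contain a net point in $J_\star$. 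The delicate bookkeeping is in making the run length available from the vanishing tail mass $\bar K_{n,\delta_0}=o(1)$ exceed the run length required by the net dispersion; this is precisely where the extra assumptions of the $R=\infty$ case are consumed, and it is the step I expect to demand the most care.
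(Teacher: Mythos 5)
Your proposal follows essentially the same route as the paper's proof: stage one (convergence to some $\bar{\varphi}$ via the deterministic lower bound of Lemma \ref{lem:R} and the summable decrements from $\sum_n T_n\log n<\infty$) is identical, and stage two is the paper's argument in all but notation — the finiteness of the one-dimensional level set traps the chain in a single small hypercube over a contiguous run inside a kernel-constant block, Lemma \ref{lem:dense_TV} converts the moving target into a common hypercube in $u$-space (your $J_\star$), and the rates in \ref{H:K4b} and \ref{H:K5} make the run long enough for the embedded $(t,k,1)$-net to hit it. The ``delicate bookkeeping'' you defer is exactly what the paper carries out with the explicit quantities $k_{m_p}^{(p)}$, $\eta_p$, $k_0^{(p)}$ and $\delta_{m_p}^{(p)}$ (with the minor correction that \ref{H:K5} is used to fit the required net inside one block, while the bound on the number of escape opportunities comes from $\bar{K}_{n,\delta_0}=\smallo(1)$ together with Lemma \ref{lemma:bound_J}).
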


\begin{rem}
It is worth noting that the conditions given in Theorem \ref{thm:timeConv_Univ} rule out the case $\tilde{K}_n\sim n^{-1/d}$ and  consequently, in the deterministic version of SA, the tails of the kernel cannot decrease as fast as for the random version (i.e. with $R\in\mathbb{N}$).
\end{rem}

\begin{rem}
When $d=1$, the assumption on the Minkovski content of the level sets given in \ref{H:thm2_K4} amounts to assuming that, for any $l<\varphi^*$, $\setX_l$ is a finite set.
\end{rem}

\begin{rem}
If, for all $n\geq 1$,  $K_n=K$ for a Markov kernel $K$ acting from $(\setX, \mathcal{B}(\setX))$ to itself, then  \ref{H:K3} amounts to assuming that $F_K(\cdot,\cdot)$ is Lipschitz on $\setX^2$. In this set-up, Theorems \ref{thm:timeConv} and \ref{thm:timeConv_Univ} reduce to a  particular case of \citet[][Theorems 1 and 2]{QMC-SA}.
\end{rem}

\section{Examples\label{sec:ill}}

The goal of this section is to show that the assumptions on the sequence of Markov kernels  required by Theorems \ref{thm:timConvSA}-\ref{thm:timeConv_Univ} translate, for standard choices of sequence $(K_n)_{n\geq 1}$, into simple conditions on the rate at which the tails decrease as $n\rightarrow\infty$. 

We focus below on Student's t random walk and to the ASA kernel proposed by \citet{Ingber1989}. For this latter and for Cauchy random walks,  we show that the conditions on the scale factors are the same as for the convergence in probability results of \citet{Yang2000}, which were first proposed by \citet{Ingber1989} using a  heuristic argument.

The proofs of the results presented in this section can be found  in Section \ref{subsec:example_proofs}.

\subsection{Student's t random walks}

We recall that the Student's t distribution on $\mathbb{R}$ with location parameter $\xi\in\mathbb{R}$, scale parameter $\sigma\in\mathbb{R}_{>0}$ and $\nu\in\bar{\mathbb{N}}_{>0}$ degree of freedom, denoted by $t_{\nu}(\xi,\sigma^2)$, has the probability density function  (with respect to the Lebesgue measure) given by
$$
f(x;\xi,\nu,\sigma^2)=\frac{\Gamma\big(\frac{\nu+1}{2}\big)}{\sqrt{\nu\pi}\sigma\,\Gamma(\nu/2)}\Big(1+\frac{(x-\xi)^2}{\nu\sigma^2}\Big)^{-\frac{\nu+1}{2}},\quad x\in\mathbb{R}.
$$
In what follows we write
$$
f_{[0,1]}(x;\xi,\nu,\sigma^2)=\frac{f(x;\xi,\nu,\sigma^2)\ind_{[0,1]}(x)}{\int_{[0,1]} f(y;\xi,\nu,\sigma^2)\lambda_1(\dd y)}
$$
the density of the Student's t distribution $t_{\nu}(\xi,\sigma^2)$ truncated on $[0,1]$.

\begin{corollary}\label{cor:studentRW}
For $\bx\in [0,1]^d$ and $n\geq 1$, let 
$$
K_n(\bx,\dd \by)=\otimes_{i=1}^d f_{[0,1]}(y_i;x_i,\nu, \sigma_{n,i}^2)\lambda_1(\dd y_i)
$$ where, for $i\in 1:d$,  $(\sigma_{n,i})_{n\geq 0}$ is a   non-increasing sequence of strictly positive numbers. Let $\sigma_n=\min\{\sigma_{n,i},\,i\in 1:d\}$.  Then,   $(K_n)_{n\geq 1}$ satisfies the assumptions of Theorem \ref{thm:timConvSA}  if
\begin{align}\label{eq:condRate1}
n^{-1/d}\sigma_{n}\big(1+(\nu \sigma^2_{n})^{-1}\big)^{\frac{\nu+1}{2}}=\bigO(1).
\end{align}
Moreover, if $\lim_{n\rightarrow\infty}\sigma_{n,i}=0$ for all $i$ and $\nu=1$ (Cauchy random walk), conditions \ref{H:K1}-\ref{H:K4} hold under \eqref{eq:condRate1} 
 while \ref{H:K1}-\ref{H:K3}, \ref{H:K4b} and \ref{H:K5} hold if $n^{-1/d}/\sigma_{n}=\smallo(1)$.
\end{corollary}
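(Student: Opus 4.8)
The plan is to translate each abstract condition into an explicit estimate on the truncated Student's t density and on its Rosenblatt transform, and then read off the rate conditions. Throughout I would write $c_\nu=\Gamma(\tfrac{\nu+1}{2})/(\sqrt{\nu\pi}\,\Gamma(\nu/2))$, denote by $Z_{n,i}(x_i)=\int_{[0,1]}f(z;x_i,\nu,\sigma_{n,i}^2)\lambda_1(\dd z)$ the truncation constant, and set
$$
g(\sigma)=\frac{1}{\sigma}\Big(1+\frac{1}{\nu\sigma^2}\Big)^{-\frac{\nu+1}{2}},
$$
so that $1/g(\sigma)=\sigma(1+(\nu\sigma^2)^{-1})^{(\nu+1)/2}$ is exactly the quantity appearing in \eqref{eq:condRate1}.

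First I would treat Theorem \ref{thm:timConvSA}. Since the kernel factorises, $K_n(\by|\bx)=\prod_{i=1}^d f(y_i;x_i,\nu,\sigma_{n,i}^2)/Z_{n,i}(x_i)$; continuity on $\setX^2$ follows because $f$ is jointly continuous and each $Z_{n,i}$ is continuous and bounded below by a positive constant on $[0,1]$. For the uniform lower bound I would minimise each factor over $(x_i,y_i)\in[0,1]^2$: the numerator is smallest when $(y_i-x_i)^2=1$ and $Z_{n,i}\le 1$, so $f_{[0,1]}(y_i;x_i,\nu,\sigma_{n,i}^2)\ge c_\nu g(\sigma_{n,i})$ and $\underline K_n=c_\nu^d\prod_i g(\sigma_{n,i})$. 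A short computation shows $g'(\sigma)$ has the sign of $1-\sigma^2$, so $g$ is increasing on $(0,1)$; since $\sigma_{n,i}\ge\sigma_n$ and the coordinates whose scale stays bounded away from $0$ contribute a fixed positive factor, this yields $\underline K_n\gtrsim g(\sigma_n)^d$ for large $n$. Finally \eqref{eq:condRate1} is precisely $g(\sigma_n)\ge c\,n^{-1/d}$, whence $\underline K_n\gtrsim n^{-1}$ and $\sum_n\underline K_n=\infty$, which is the second bullet of the theorem.

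Next I would verify \ref{H:K1}-\ref{H:K3} for a general scale and only specialise to $\nu=1$ in the rate computations. As $K_n(\bx,\cdot)$ is a product measure, its Rosenblatt transform is coordinatewise, $\big(F_{K_n}(\bx,\by)\big)_i=\int_0^{y_i}f_{[0,1]}(z;x_i,\nu,\sigma_{n,i}^2)\,\dd z$; this is strictly increasing in $y_i$ since the density is strictly positive, giving \ref{H:K1}, and the conditional densities coincide with the marginals, so the same corner bound gives \ref{H:K2} with $\tilde K_n=c_\nu g(\sigma_n)$, which for $\nu=1$ and $\sigma_n\downarrow 0$ behaves like $\sigma_n$. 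For \ref{H:K3} I would work in the regime where the target is separated from the current point: there $y_i-x_i$ is bounded away from $0$, so the density sits in the Student's t tail, $f_{[0,1]}(y_i;x_i,\nu,\sigma_{n,i}^2)=\bigO(\sigma_{n,i}^{\nu}\delta_0^{-(\nu+1)})$, furnishing $\bar K_{n,\delta_0}$, which for $\nu=1$ is $\bigO(\sigma_n)$. To produce $C_n$ I would bound the modulus of continuity of $F_{K_n}$ through its two partial derivatives: the derivative in $y_i$ is just the (tail-valued) density, while the derivative in the location $x_i$, obtained by differentiating the normalised CDF, is a combination of boundary values of $f$ divided by $Z_{n,i}$ in which the potentially large $\bigO(1/\sigma_n)$ contributions cancel to leading order, leaving a quantity of the same tail order $\bigO(\sigma_n)$.

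The last step is to collect the rates. For $\nu=1$ and $\sigma_n\downarrow 0$ we have $\tilde K_n\sim\sigma_n$ and $C_n,\bar K_{n,\delta_0}=\bigO(\sigma_n)$, all bounded; moreover \eqref{eq:condRate1} reads $n^{-1/d}(\sigma_n+\sigma_n^{-1})=\bigO(1)$, i.e.\ $n^{-1/d}/\sigma_n=\bigO(1)$, which is exactly $n^{-1/d}/\tilde K_n=\bigO(1)$, while $C_n/\tilde K_n=\bigO(1)$ and $\bar K_{n,\delta_0}=\smallo(1)$ follow from the tail order and $\sigma_n\downarrow 0$; this is \ref{H:K4}. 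Replacing $\bigO(1)$ by $\smallo(1)$ in the hypothesis on $\sigma_n$ upgrades $n^{-1/d}/\tilde K_n$ to $\smallo(1)$, giving \ref{H:K4b}, and since $\bar K_{n,\delta_0}$ is also of order $\sigma_n$ it yields $n^{-1/d}/\bar K_{n,\delta_0}=\smallo(1)$, i.e.\ \ref{H:K5}. The step I expect to be the main obstacle is the verification of \ref{H:K3}: one must bound the location derivative of the normalised truncated CDF uniformly over the separated region, exhibiting the cancellation of the $\bigO(1/\sigma_n)$ boundary terms coming from the truncation constant $Z_{n,i}$, so that $C_n$ is genuinely of tail order $\sigma_n$ rather than $1/\sigma_n$ — this is precisely what makes $C_n/\tilde K_n=\bigO(1)$ hold.
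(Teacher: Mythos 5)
Your treatment of the assumptions of Theorem \ref{thm:timConvSA} and your identification of $\tilde K_n=c_\nu g(\sigma_n)$ and of $\bar K_{n,\delta_0}=\bigO(\sigma_n^{\nu})$ match the paper's proof: the paper likewise minimises the density at $(y_i-x_i)^2=1$ to get $\tilde K_n$, uses the separation $|x_i-y_i|\ge\delta_0$ to get the tail bound for $\bar K_{n,\delta_0}$, and observes that \eqref{eq:condRate1} is exactly $n^{-1/d}/\tilde K_n=\bigO(1)$. The genuine gap is in your verification of \ref{H:K3}--\ref{H:K4}, precisely at the step you flag as the main obstacle. Your claim that the $\bigO(1/\sigma_n)$ boundary contributions to $\partial_{x_i}F_{K_{n,i}}$ cancel ``to leading order, leaving a quantity of the same tail order $\bigO(\sigma_n)$'' is false: the singular $1/\sigma_n$ parts do cancel, but what survives is $\Theta(1)$, not $\Theta(\sigma_n)$. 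Concretely, for $d=1$, $\nu=1$, write $F_{K_n}(x,y)=\big(\arctan(\tfrac{y-x}{\sigma_n})+\arctan(\tfrac{x}{\sigma_n})\big)/\big(\arctan(\tfrac{1-x}{\sigma_n})+\arctan(\tfrac{x}{\sigma_n})\big)$ and take $\tilde x=0$, $x'=1/2$, $x=\delta=\sigma_n$, $y=1/2$. Expanding $\arctan(t)=\tfrac{\pi}{2}-\tfrac1t+\bigO(t^{-3})$ gives $F_{K_n}(0,\tfrac12)-F_{K_n}(\sigma_n,\tfrac12)=-\tfrac{2\sigma_n}{3\pi}+\bigO(\sigma_n^2)$, i.e.\ a discrepancy of order $\delta$ with $\delta=\sigma_n$; equivalently $\partial_xF_{K_n}(x,\tfrac12)\big|_{x=0}\to 4/\pi^2$. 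Hence any admissible $C_n$ in \ref{H:K3} is bounded \emph{below} by a positive constant, and your chain $C_n=\bigO(\sigma_n)\Rightarrow C_n/\tilde K_n=\bigO(1)$ collapses. The effect is caused by the truncation to $[0,1]$: when $x_i$ is within $\bigO(\sigma_n)$ of an endpoint, the normalising constant $Z_{n,i}(x_i)$ varies at unit speed in $x_i$, and this variation is not damped by the tail factor.

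For comparison, the paper does not attempt this cancellation at all. It works directly with the closed-form Cauchy c.d.f.\ and uses concavity/convexity of $\arctan$ on the half-lines to bound the increments in \eqref{eq:bb_asa}, arriving at the explicit constant $C_n=8/(P_1\pi\sigma_n)$ --- of order $1/\sigma_n$, not $\sigma_n$ --- before asserting that the conclusion follows; this reliance on the explicit c.d.f.\ is also why the second part of the corollary is stated only for $\nu=1$ (see the remark following the statement), whereas your derivative-based plan purports to work for general $\nu$ up to the final rate computation. So on this step your route both departs from the paper's and rests on an estimate that fails; to repair the argument you would need either to keep track of the $\delta_0$-dependent factor $\big(1+(\delta_0/\sigma_n)^2\big)^{-1}$ in the non-boundary increments and handle the boundary terms separately, or to reinterpret \ref{H:K3}--\ref{H:K4} with constants allowed to depend on $\delta_0$, neither of which is addressed in your proposal.
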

 
\begin{rem}
We conjecture that the second part of the corollary holds for any $\nu\geq 1$. However, establishing this result for $\nu>1$ is much more involved because, in this case,  the c.d.f. of the resulting Student's t distribution does not admit a ``closed'' form expression. 
\end{rem}

Note that, since the tails of the Student's $t$ distribution become  thinner as $\nu$ increases, the conditions in the above result  become more and more complicated to fulfil as $\nu$ increases. For instance, for Gaussian random walks,  \eqref{eq:condRate1} requires that
$$
n^{-1/d}\sigma_n\exp\big\{(2\sigma_n^2)^{-1}\}=\bigO(1)
$$
while, for  Cauchy random walks (i.e. $\nu=1$), we only need that the sequence $(n^{-1/d}/\sigma_n)_{n\geq 1}$ is bounded.

Condition \eqref{eq:condRate1} for the Cauchy proposal is similar to  \citet[][Corollary 3.1]{Yang2000} who, adapting the proof of \citet[][Theorem 1]{Belisle1992}, derives a convergence in probability result for the sequence $(
\varphi(\bbx_0^n))_{n\geq 1}$. See also \citet{Ingber1989} who found similar rates for  Gaussian and Cauchy random walks with a heuristic argument.

\subsection{Adaptive Simulated Annealing (ASA)\label{sub:ASA}}

For Markov kernels of the form $K_n(x,\dd y)=\otimes_{i=1}^d K_{n,i}(y_i|x_i)\lambda_1(\dd y_i)$, the ability  to perform local exploration may be measured by the rate at which the mass of $K_{n,i}(y_i|x_i)$ concentrates around $x_i$ as $n$ increases; that is, by
$$
\bar{K}_{n,i}=\sup_{(x_i,y_i)\in[0,1]^2}K_{n,i}(y_i|x_i).
$$
For Student's t random walks, it is easy to see that $\bar{K}_{n,i}=\bigO(\sigma^{-1}_{n,i})$. Therefore, because  the rate of the decay of the step size $\sigma_{n,i}$  given in Corollary \ref{cor:studentRW} becomes very slow as $d$ increases,  Student's t random walks may fail to perform good local exploration even in moderate dimensional optimization problems.

To overcome this  limit of the Student's t random walks, \citet{Ingber1989} proposes to use the Markov kernel $K_n(\bx,\dd \by)=\otimes_{i=1}^dK_{n,i}(y_i|x_i)\lambda_1(\dd y_i)$ where
\begin{align}\label{eq:ASA1}
K_{n,i}(y_i|x_i)=\frac{\tilde{K}_{n,i}(y_i|x_i)}{\tilde{K}_{n,i}(x_i,[0,1])}
\end{align}
with $\tilde{K}_{n,i}(x_i, \dd y_i)$ a probability distribution on the set
$
[x_i-1,1+x_i]\supseteq [0,1]
$
with density (with respect to Lebesgue measure) defined, for $x_i\in [0,1]$, by
\begin{align}\label{eq:ASA2}
\tilde{K}_{n,i}(y_i|x_i)=\left\{2\Big(|y_i-x_i|+\sigma_{n,i}\Big)\log(1+\sigma_{n,i}^{-1})
\right\}^{-1}\,\,y_i\in[x_i-1,1+x_i]
\end{align}
and where  $(\sigma_{n,i})_{n\geq 1}$, $i\in 1:d$, are non-increasing sequences of strictly positive numbers. Note that, for $u_i\in [0,1]$,
$$
F^{-1}_{K_{n,i}}(x_i,u_i)=x_i+G_{n,i}\left(F_{\tilde{K}_{n,i}}(x_i,0)+u_i\tilde{K}_{n,i}(x_i,[0,1])\right)
$$
where, for $u\in [0,1]$,  \citep[see][]{Ingber1989}
$$
G_{n,i}(u)=\text{sgn}(u-0.5)\sigma_{n,i}\left[(1+\sigma_{n,i}^{-1})^{|2u-1|}-1\right]
$$
and, for $y_i\in[x_i-1,1+x_i]$,
$$
F_{\tilde{K}_{n,i}}(\bx_i,y_i)=\frac{1}{2}+\frac{\text{sgn}(y_i-x_i)}{2}\frac{\log\left(1+\frac{|y_i-x_i|}{\sigma_{n,i}}\right)}{\log\left(1+\frac{1}{\sigma_{n,i}}\right)}.
$$

For this kernel, we obtain the following result.
\begin{corollary}\label{cor:ASA}
For $\bx\in [0,1]^d$ and $n\geq 1$, let $K_n(\bx,\dd \by)=\otimes_{i=1}^dK_{n,i}(y_i|x_i)\lambda_1(\dd y_i)$ with, for $i\in 1:d$,  $K_{n,i}(y_i|x_i)$ defined by \eqref{eq:ASA1}-\eqref{eq:ASA2} and  $(\sigma_{n,i})_{n\geq 0}$   non-increasing sequences of strictly positive numbers that converge to zero as $n\rightarrow\infty$. Let $\sigma_n=\min\{\sigma_{n,i},\,i\in 1:d\}$.  Then, \ref{H:K1}-\ref{H:K4} hold  if
\begin{align}\label{eq:condRateASA}
n^{-1/d}\log(\sigma_n^{-1})=\bigO(1)
\end{align}
 while \ref{H:K1}-\ref{H:K3}, \ref{H:K4b} and \ref{H:K5} hold if 
$$
n^{-1/d}\log(\sigma_n^{-1})=\smallo(1).
$$
Moreover, under \eqref{eq:condRateASA}, the resulting sequence $(K_n)_{n\geq 1}$ satisfies the assumptions of Theorem \ref{thm:timConvSA}.
\end{corollary}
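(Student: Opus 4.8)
The plan is to check Assumptions \ref{H:K1}--\ref{H:K5} coordinate by coordinate for the product kernel $K_n(\bx,\dd\by)=\otimes_{i=1}^d K_{n,i}(y_i|x_i)\lambda_1(\dd y_i)$ and then to read off the required rates from $\sigma_n=\min_i\sigma_{n,i}$. Each factor $K_{n,i}(\cdot|x_i)$ is a continuous, strictly positive density on $[0,1]$, so \ref{H:K1} (strict monotonicity of the conditional c.d.f.) and the continuity demanded in \ref{H:K2} are immediate; everything else rests on controlling the normaliser. Integrating \eqref{eq:ASA2} over $[0,x_i]$ and $[x_i,1]$ separately gives
$$
\tilde K_{n,i}(x_i,[0,1])=\frac{\log(1+x_i/\sigma_{n,i})+\log(1+(1-x_i)/\sigma_{n,i})}{2\log(1+\sigma_{n,i}^{-1})}.
$$
Writing $L_{n,i}=\log(1+\sigma_{n,i}^{-1})$ and $L_n=\log(1+\sigma_n^{-1})$, each logarithm in the numerator is at most $L_{n,i}$, so $\tilde K_{n,i}(x_i,[0,1])\leq 1$; and since $\max(x_i,1-x_i)\geq\tfrac12$ together with $\log(1+\tfrac{1}{2\sigma_{n,i}})\geq\tfrac12\log(1+\sigma_{n,i}^{-1})$ forces the larger numerator logarithm to exceed $\tfrac12 L_{n,i}$, one also has $\tilde K_{n,i}(x_i,[0,1])\geq\tfrac14$. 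Thus the normaliser is trapped in $[\tfrac14,1]$, uniformly in $x_i$ and $n$.

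The density bounds then follow at once. The numerator \eqref{eq:ASA2} ranges between $\{2(1+\sigma_{n,i})L_{n,i}\}^{-1}$ (at $|y_i-x_i|=1$) and $\{2\sigma_{n,i}L_{n,i}\}^{-1}$ (at $y_i=x_i$). Dividing by the normaliser and using $L_{n,i}\leq L_n$ gives, for $n$ large, $\inf_{i,x_i,y_i}K_{n,i}(y_i|x_i)\geq(4L_n)^{-1}$, so we may take $\tilde K_n\asymp 1/\log(\sigma_n^{-1})$, verifying \ref{H:K2}. Restricting instead to $|y_i-x_i|\geq\delta_0$ gives $K_{n,i}(y_i|x_i)\leq 2\{(\delta_0+\sigma_{n,i})L_{n,i}\}^{-1}$, so the density part of \ref{H:K3} holds with $\bar K_{n,\delta_0}\to0$ and, for fixed $\delta_0$, $1/\bar K_{n,\delta_0}=\bigO(\log(\sigma_n^{-1}))$.

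The main obstacle is the Lipschitz constant $C_n$ of \ref{H:K3}, and here the explicit c.d.f. of Section \ref{sub:ASA} is indispensable. Because $\tilde K_{n,i}$ is a location family, $F_{\tilde K_{n,i}}(x_i,y_i)=\Psi_{n,i}(y_i-x_i)$ for an explicit $\Psi_{n,i}$, whence $F_{K_{n,i}}(x_i,y_i)=\{\Psi_{n,i}(y_i-x_i)-\Psi_{n,i}(-x_i)\}/\{\Psi_{n,i}(1-x_i)-\Psi_{n,i}(-x_i)\}$, the denominator being exactly $\tilde K_{n,i}(x_i,[0,1])\in[\tfrac14,1]$. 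In the regime enforced by $\lambda_d(B_{2\delta_0}(\tilde x)\cap B_{2\delta_0}(x'))=0$ the arguments $y_i-x_i$ and $x_i'-\tilde x_i$ of the leading $\Psi_{n,i}$-terms are bounded away from $0$ by $2\delta_0$, where $\Psi_{n,i}'=\bigO(1/(\delta_0 L_{n,i}))$; the plan is to expand the increment $F_{K_{n,i}}(\tilde x_i,x_i')-F_{K_{n,i}}(x_i,y_i)$, bound each piece by $\delta$ times a multiple of $1/L_{n,i}$, and so obtain $C_n=\bigO(1/\log(\sigma_n^{-1}))$, i.e.\ $C_n/\tilde K_n=\bigO(1)$. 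The delicate point, and the step I expect to require the most care, is controlling the truncation terms $\Psi_{n,i}(-x_i)$ and $\Psi_{n,i}(1-x_i)$, whose sensitivity to the location $x_i$ is governed by the peak of the density and which must be shown not to spoil the $1/\log(\sigma_n^{-1})$ rate when $x_i$ lies near an endpoint of $[0,1]$.

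Granted these three constants, the rate conditions are mechanical. Assumption \ref{H:K4} reduces to $n^{-1/d}/\tilde K_n\asymp n^{-1/d}\log(\sigma_n^{-1})=\bigO(1)$, which is exactly \eqref{eq:condRateASA}, while $C_n/\tilde K_n=\bigO(1)$ and $\bar K_{n,\delta_0}=\smallo(1)$ hold with no extra hypothesis; replacing $\bigO(1)$ by $\smallo(1)$ gives \ref{H:K4b}, and $1/\bar K_{n,\delta_0}=\bigO(\log(\sigma_n^{-1}))$ turns \ref{H:K5} into the same $\smallo$ requirement. Lastly, for Theorem \ref{thm:timConvSA} the joint density is continuous and bounded below by $\underline K_n=\prod_{i=1}^d\inf_{(x_i,y_i)}K_{n,i}(y_i|x_i)\geq(4L_n)^{-d}$; condition \eqref{eq:condRateASA} gives $L_n=\bigO(n^{1/d})$, hence $\underline K_n\gtrsim 1/n$ and $\sum_{n\geq1}\underline K_n=\infty$, which is the last requirement of that theorem.
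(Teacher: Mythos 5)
Your treatment of the normaliser and of the constants $\tilde K_n$, $\bar K_{n,\delta_0}$ and $\underline K_n$ is correct and in places more explicit than the paper's (the paper only invokes monotonicity of $P_n=\sup_{x_i}\tilde K_{n,i}(x_i,[0,1])$ to get a positive lower bound, whereas you compute the normaliser exactly and trap it in $[\tfrac14,1]$); your derivation of $\sum_n\underline K_n=\infty$ for Theorem \ref{thm:timConvSA} also matches what the paper leaves as ``trivial to verify''. The problem is that the one genuinely non-trivial item, the verification of \ref{H:K3} with a constant $C_n$ compatible with \ref{H:K4}, is not actually proved: you announce a ``plan'' to expand $F_{K_{n,i}}(\tilde x_i,x_i')-F_{K_{n,i}}(x_i,y_i)$ and yourself flag the truncation terms as the step requiring the most care. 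That is precisely where the paper does its work: it writes the decomposition \eqref{eq:bb_asa}, applies $\log(1+x)\le x$ to the explicit c.d.f.\ $F_{\tilde K_{n,i}}$, and arrives at $C_n=4/\big(\sigma_n\log(1+\sigma_n^{-1})\,P_\setX\big)$.

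Moreover, the target you announce, $C_n=\bigO\big(1/\log(\sigma_n^{-1})\big)$, does not follow from the separation argument you sketch. The hypothesis $\lambda_d\big(B_{2\delta_0}(\tilde x)\cap B_{2\delta_0}(x')\big)=0$ only forces $|\tilde x_i-x_i'|$ to be large in at least one coordinate when $d\ge 2$; in the remaining coordinates $y_i-x_i$ may pass through $0$, where the derivative of $F_{\tilde K_{n,i}}(x_i,\cdot)$ is of order $1/\big(\sigma_n\log(1+\sigma_n^{-1})\big)$ and the bound $\Psi_{n,i}'=\bigO\big(1/(\delta_0 L_{n,i})\big)$ is unavailable. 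Even in the separated coordinates your argument yields a constant of order $1/\big(\delta_0\log(\sigma_n^{-1})\big)$, i.e.\ one depending on $\delta_0$, whereas $C_n$ in \ref{H:K3} carries no $\delta_0$ index (contrast with $\bar K_{n,\delta_0}$); a $\delta_0$-uniform constant is necessarily of order $1/\big(\sigma_n\log(\sigma_n^{-1})\big)$ (consider $|\tilde x_i-x_i'|\asymp\sigma_n$), which is exactly what the paper's bound $\sigma_n+|\tilde x_i-x_i'|\ge\sigma_n$ produces. So either you must carry out the computation the paper performs and accept its larger $C_n$, or you must justify a $\delta_0$-dependent reading of \ref{H:K3}; as written, the key estimate of the corollary is missing.
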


As for  Cauchy random walks, note that the rate for $\sigma_n$ implied by \eqref{eq:condRateASA} is identical to one obtained by \citet[][Corollary 3.4] {Yang2000} for the convergence (in probability) of the sequence $(\varphi(\bbx_0^n))_{n\geq 1}$. See also \citet{Ingber1989} who find the same rate using a heuristic argument.

\section{Proofs and auxiliary results\label{sec:proofs}}

\subsection{Preliminaries\label{p-remarks}}

We first state a technical lemma that plays a key role to provide conditions on the cooling schedules $(T_n)_{n\geq 1}$.

\begin{lemma}\label{lem:R}
Let $(U_R^n)_{n\geq 0}$ be a $(0,1)_R$-sequence in base $b\geq 2$ such that $u_{\infty}^0=0$. Then, for any $\alpha>0$ and $R\in\bar{\mathbb{N}}$, $\P$-almost surely, $U_R^n(\omega)\geq n^{-(1+\alpha)}$ for all $n$ large enough.
\end{lemma}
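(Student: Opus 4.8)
The plan is to treat separately the deterministic case $R=\infty$, where $U_\infty^n=u_\infty^n$ carries no randomness, and the random case $R\in\mathbb{N}$. The common engine is the net property of $(0,1)$-sequences: for every $m\geq 0$ and $a\geq 0$ the block $\{u_\infty^n\}_{n=ab^m}^{(a+1)b^m-1}$ is a $(0,m,1)$-net in base $b$, so exactly one of these $b^m$ points lies in the elementary interval $[0,b^{-m})$.

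For $R=\infty$, fix $n\geq 1$ and let $m$ satisfy $b^{m-1}\leq n<b^m$ (i.e.\ $m=k_n$). Applying the net property to the block $\{0,\dots,b^m-1\}$ and using the hypothesis $u_\infty^0=0\in[0,b^{-m})$, the unique point of that block lying in $[0,b^{-m})$ is $u_\infty^0$; hence $u_\infty^n\geq b^{-m}\geq (bn)^{-1}$, where the last inequality is $b^m\leq bn$. Since $(bn)^{-1}\geq n^{-(1+\alpha)}$ as soon as $n^{\alpha}\geq b$, the bound $U_\infty^n=u_\infty^n\geq n^{-(1+\alpha)}$ holds for all $n$ large enough, deterministically.

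For $R\in\mathbb{N}$ write $U_R^n(\omega)=\tilde u^n+b^{-R}\omega_{n+1}$ with $\tilde u^n=\sum_{k=1}^R a_k^n b^{-k}$, the truncation of $u_\infty^n$ to its first $R$ digits, and recall that under $\P$ the coordinates $(\omega_{n+1})_{n\geq 0}$ are i.i.d.\ uniform on $\ui$. I would split $\mathbb{N}$ into the good set $G=\{n:\tilde u^n\geq b^{-R}\}$ and the bad set $S=\{n:\tilde u^n=0\}$ (equivalently, $u_\infty^n\in[0,b^{-R})$). For $n\in G$ we have $U_R^n\geq b^{-R}\geq n^{-(1+\alpha)}$ once $n^{1+\alpha}\geq b^R$, so good indices cause no trouble for $n$ large. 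On $S$, however, $U_R^n=b^{-R}\omega_{n+1}$ may be arbitrarily small, and this is where the probabilistic argument is needed. The net property applied in base-$b$ blocks of length $b^R$ shows that $S$ contains exactly one index in each block $[ab^R,(a+1)b^R)$, $a\geq 0$; enumerating $S=\{s_0<s_1<\cdots\}$ gives $s_a\geq ab^R$, with $s_0=0$ forced by $u_\infty^0=0$.

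It then remains to apply the first Borel--Cantelli lemma to the events $E_n=\{\omega_{n+1}<b^R n^{-(1+\alpha)}\}$, $n\in S$, each of which has probability $b^R n^{-(1+\alpha)}\wedge 1$. Using $s_a\geq ab^R$ and $1+\alpha>1$,
$$
\sum_{\substack{n\in S\\ n\geq 1}} b^R n^{-(1+\alpha)}=\sum_{a\geq 1}b^R s_a^{-(1+\alpha)}\leq b^{-R\alpha}\sum_{a\geq 1}a^{-(1+\alpha)}<\infty,
$$
so $\P$-a.s.\ only finitely many $E_n$ occur; hence $\P$-a.s.\ $U_R^n=b^{-R}\omega_{n+1}\geq n^{-(1+\alpha)}$ for all large $n\in S$. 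Combining the good and bad indices yields the claim for finite $R$ (and $R=0$ is the special case $\tilde u^n\equiv 0$, $S=\mathbb{N}$). The main obstacle is not the Borel--Cantelli computation itself but correctly extracting the sparsity of the bad set $S$ from the $(0,1)$-net structure---specifically that its $a$-th element satisfies $s_a\geq ab^R$---since it is exactly this $b^R$-spacing that makes the series converge; the role of the hypothesis $u_\infty^0=0$ is to pin the lone near-zero point of the first block at the harmless index $n=0$.
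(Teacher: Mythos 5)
Your proof is correct and follows essentially the same route as the paper: the deterministic elementary-interval bound $u_{\infty}^n\geq b^{-k_n}$ (forced by $u_\infty^0=0$ and the $(0,m,1)$-net property) for $R=\infty$, and a first Borel--Cantelli argument driven by a convergent series of order $\sum_n b^R n^{-(1+\alpha)}$ for finite $R$. The only difference is that your good/bad decomposition of $\mathbb{N}$ via the net structure is an unneeded refinement: the paper reaches the same series directly from the uniform bound $\P\big(U_R^n<n^{-(1+\alpha)}\big)\leq b^R n^{-(1+\alpha)}$, which is automatically zero on your set $G$ and equals your estimate on $S$.
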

\begin{proof}
Let $\alpha>0$ be fixed and assume first that $R\in\mathbb{N}$. Then, for any $n\geq 1$ such that $n^{-(1+\alpha)}\leq b^{-R}$,
$$
\P\big(U_R^n(\omega)<n^{-(1+\alpha)}\big)\leq b^{R}n^{-(1+\alpha)}.
$$
Consequently, noting that a sufficient condition to have  $n^{-(1+\alpha)}\leq b^{-R}$ is that $n\geq b^R$, we have
\begin{align*}
\sum_{n=1}^{\infty}\P\big(U_{R}^n(\omega)<n^{-(1+\alpha)}\big)&\leq b^R
+b^R\sum_{n=1}^{\infty}n^{-(1+\alpha)}<\infty
\end{align*}
 and the result follows by Borel-Cantelli lemma. 

If $R=\infty$  note that, as $u_{\infty}^0=0$ and by the properties of $(0,1)$-sequences in base $b$,  $u_{\infty}^n\geq b^{-k_n}$  for all $n\geq 1$,  where we recall that $k_n$ denotes the smallest integer such that $n<b^{k_n}$. Thus, the result follows when $R=\infty$ from the fact that $[0,n^{-(1+\alpha)})\subseteq [0,b^{-k_n})$ for $n$ sufficiently large.
\end{proof}

We now state a preliminary result that will be repeatedly used in the following and which gives some insights on the assumptions on  $(K_n)_{n\geq 1}$ listed in Section \ref{sub:more_notation}

\begin{lemma}\label{lem:dense_TV}
Let  $K_n:[0,1]^d\times\mathcal{B}([0,1]^d)\rightarrow [0,1]$ be a Markov kernel such that conditions \ref{H:K1}-\ref{H:K3} hold. Let $\delta_0 >0$ and $(\tilde{\bx},\bx')\in [0,1]^{2d}$ be such that  $\lambda_d\big(B_{2\delta_0}(\tilde{x})\cap B_{2\delta_0}(x')\big)=0$. Let $\tilde{C}_{n}=0.5\tilde{K}_{n}\big\{1 \wedge \big(0.25\tilde{K}_{n}/C_{n}\big)^{d}\big\}$, $\bar{\delta}_{n,\delta_0}= 1/\tilde{C}_{n}\wedge 0.5\wedge\delta_0$ and $v_{n}:\mathbb{R}_{>0}\rightarrow\mathbb{R}_{>0}$ be defined by
$$
 v_{n}(\delta)=\delta\Big(1\wedge (0.25\tilde{K}_{
n}/
C_{n})^d\Big),\quad\delta\in \mathbb{R}_{>0}.
$$
Then, for all $\delta\in (0, \bar{\delta}_{n,\delta_0}]$, there exist non-empty closed hypercubes 
$$
\underline{W}_{n}(\tilde{\bx},\bx',\delta)\subset\ui^d,\quad \bar{W}_{n}(\tilde{\bx},\bx',\delta)\subset\ui^d,
$$
respectively of side $\underline{S}_{n,\delta}:=\delta\tilde{C}_{n}$ and $\bar{S}_{n,\delta_0,\delta}:=2.5\delta\bar{K}_{n,\delta_0}\vee 1$, such that
\begin{align*}%\label{eq:proofThm2}
\underline{W}_{n}(\tilde{\bx},\bx',\delta)\subseteq  K_{n}\big(\bx, B_{v_{n}(\delta)}(\bx')\big)\subseteq \bar{W}_{n}(\tilde{\bx},\bx',\delta),\quad\forall\bx\in  B_{v_{n,}(\delta)}(\tilde{\bx}).
\end{align*}
\end{lemma}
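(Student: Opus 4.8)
The plan is to read the statement as a two-sided enclosure of the image of $B_{v_n(\delta)}(\bx')$ under the Rosenblatt transform; that is, writing $K_n(\bx,A)$ for $F_{K_n}(\bx,A)=\{F_{K_n}(\bx,\by):\by\in A\}\subseteq\uid$, I must produce an inner cube contained in, and an outer cube containing, this image, \emph{uniformly} over all $\bx\in B_{v_n(\delta)}(\tilde\bx)$. The whole argument is organized around the triangular (Knothe--Rosenblatt) structure: the $i$-th coordinate of $F_{K_n}(\bx,\by)$ is the conditional c.d.f.\ $\int_0^{y_i}K_{n,i}(s\mid y_{1:i-1},\bx)\,\dd s$, so I can process the coordinates one at a time, feeding the already-controlled coordinates $y_{1:i-1}$ into the analysis of coordinate $i$. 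The three hypotheses play distinct roles: the lower bound $\tilde K_n$ from \ref{H:K2} forces the image to be ``wide'' (this drives the inner cube), the upper bound $\bar K_{n,\delta_0}$ from \ref{H:K3} forces it to be ``narrow'' (this drives the outer cube), and the Lipschitz-type estimate $C_n$ from \ref{H:K3} controls both the displacement of the image as $\bx$ ranges over $B_{v_n(\delta)}(\tilde\bx)$ and the coupling between coordinates.

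For the outer inclusion I would fix $\bx\in B_{v_n(\delta)}(\tilde\bx)$, set $\bu^\star=F_{K_n}(\tilde\bx,\bx')$, and apply the Lipschitz bound in \ref{H:K3} with radius $v_n(\delta)\le\delta\le\delta_0$: every image point $F_{K_n}(\bx,\by)$ with $\by\in B_{v_n(\delta)}(\bx')$ then lies within $v_n(\delta)\,C_n$ of $\bu^\star$ in each coordinate, while the density upper bound shows that, coordinate by coordinate, $F_{K_n}(\bx,\cdot)$ stretches $B_{v_n(\delta)}(\bx')$ by at most $2v_n(\delta)\bar K_{n,\delta_0}$. Using $v_n(\delta)\le\delta$ together with the elementary inequality $v_n(\delta)C_n\le 0.25\,\delta\,\tilde K_n\le 0.25\,\delta\,\bar K_{n,\delta_0}$ (which follows from the definition of $v_n$ and $\tilde K_n\le\bar K_{n,\delta_0}$), these combine into containment in a cube of side at most $2.5\,\delta\,\bar K_{n,\delta_0}$; the $\vee 1$ truncation simply caps the side at the diameter of $\uid$. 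This produces $\bar W_n$.

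For the harder inner inclusion I would again center a candidate cube at $\bu^\star$ and show that its $F_{K_n}(\bx,\cdot)$-preimage stays inside $B_{v_n(\delta)}(\bx')$. Inverting triangularly and writing $y_i-x'_i=\epsilon_i$, I peel off the $i$-th conditional c.d.f.: the ``diagonal'' increment $\int_{x'_i}^{y_i}K_{n,i}(s\mid y_{1:i-1},\bx)\,\dd s$ is bounded below by $\tilde K_n|\epsilon_i|$ via \ref{H:K2}, while the ``off-diagonal'' part (the change of the conditional c.d.f.\ at $x'_i$ caused by moving $\bx$ and $y_{1:i-1}$) is bounded by $C_n$ times the current perturbation via \ref{H:K3}. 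This yields a per-coordinate constraint in which the usable portion of the image is a fixed fraction of order $\tilde K_n/C_n$ of the naive one; because each coordinate further constrains the next through $y_{1:i-1}$, these fractions compound multiplicatively across the $d$ coordinates, which is exactly why the admissible radius $v_n(\delta)$ and the inner side $\delta\tilde C_n$ both carry the factor $\big(0.25\,\tilde K_n/C_n\big)^{d}$ and why $\bar\delta_{n,\delta_0}$ is thresholded by $1/\tilde C_n$: the constants $0.25$, $0.5$ in $\tilde C_n$ and $v_n$ are chosen precisely so that the geometric accumulation across coordinates closes and the resulting cube is non-empty yet lands inside $B_{v_n(\delta)}(\bx')$.

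The step I expect to be the main obstacle is this inner inclusion, and specifically the demand that a single, $\bx$-independent cube work simultaneously for every $\bx\in B_{v_n(\delta)}(\tilde\bx)$. Since the Rosenblatt image is a slanted, $\bx$-dependent region rather than an axis-aligned box, one must intersect these regions over all admissible $\bx$ and still inscribe an axis-aligned cube; keeping the intersection non-trivial is exactly what forces the small radius $v_n(\delta)$ and the delicate balancing of $\tilde K_n$ against $C_n$. This is also where the separation hypothesis $\lambda_d\big(B_{2\delta_0}(\tilde x)\cap B_{2\delta_0}(x')\big)=0$ is essential: it keeps $\bx'$ in the tail region relative to $\tilde\bx$, so that the constants $C_n$ and $\bar K_{n,\delta_0}$ governing the off-diagonal and stretching effects are genuinely controlled rather than of peak-density size. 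I would therefore spend the bulk of the effort tracking the constants through the coordinate recursion to confirm that $\delta\tilde C_n$ and $2.5\,\delta\bar K_{n,\delta_0}\vee 1$ indeed sandwich the image uniformly in $\bx$, and verifying that the threshold $\bar\delta_{n,\delta_0}$ keeps every constructed cube a genuine non-empty subset of $\uid$.
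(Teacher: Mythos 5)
The paper itself does not prove this lemma: its ``proof'' is a one-line deferral to Lemmas 1, 2 and 6 of the cited QMC-SA paper, so there is no in-paper computation to match you against line by line. That said, your reading of the statement (interpreting $K_n(\bx,A)$ as the image $F_{K_n}(\bx,A)\subseteq\uid$), your assignment of roles to \ref{H:K2} and \ref{H:K3}, and the coordinate-by-coordinate use of the triangular Rosenblatt structure are exactly the approach the authors are pointing to. Your outer inclusion does close as sketched: \ref{H:K3} applied at scale $v_n(\delta)$ already confines the whole image to a cube of side $2v_n(\delta)C_n\le 0.5\,\delta\tilde K_n\le 0.5\,\delta\bar K_{n,\delta_0}$ centred at $\bu^\star=F_{K_n}(\tilde\bx,\bx')$, comfortably within the stated side.

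The gap is the inner inclusion, which you correctly single out as the main obstacle but then only promise to carry out. As described, the bookkeeping does not yet close. For coordinate $i$, the admissible interval of values of the $i$-th component of $F_{K_n}(\bx,\by)$ has guaranteed one-sided length $v_n(\delta)\tilde K_n$ around its anchor (the $i$-th component of $F_{K_n}$ evaluated at $y_i=x'_i$), by \ref{H:K2}; but \ref{H:K3} only places that anchor within $v_n(\delta)C_n$ of $\bu^\star$, and \ref{H:K4} guarantees merely $C_n/\tilde K_n=\bigO(1)$, not $C_n\le\tilde K_n$. Hence, intersecting over all $\bx\in B_{v_n(\delta)}(\tilde\bx)$ with a single radius $v_n(\delta)$ in every coordinate, the resulting interval can be empty, and the inequality $v_n(\delta)C_n\le 0.25\,\delta\tilde K_n$ you invoke does not by itself rescue this, since the comparison that matters is between $v_n(\delta)C_n$ and $v_n(\delta)\tilde K_n$. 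The factor $\big(0.25\tilde K_n/C_n\big)^d$ must therefore be realized as a genuine recursion in which the radius allotted to coordinate $i$ shrinks geometrically in $i$ (equivalently, one first shows the preimage of the cube of side $\delta\tilde C_n$ lands in a ball of radius of order $v_n(\delta)\big(1+C_n/\tilde K_n\big)\le\delta$ and then tightens to the stated radius), together with the verification that the cube is non-empty and contained in $\uid$ under the threshold $\bar\delta_{n,\delta_0}$. That recursion is the entire content of the lemma, and your proposal stops exactly where it begins.
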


\begin{proof}
The proof of this result follows from similar  computations as in the proof of \citet[][Lemmas 1,  2 and 6]{QMC-SA} and is thus omitted to save space.
\end{proof}

\begin{rem}
As a corollary, note the following. Let $(\tilde{\bx},\bx')\in\setX^2$ and $\delta>0$ be as in Lemma \ref{lem:dense_TV}. Define
\begin{align}\label{eq:kVal}
k_{n,\delta}=\bigg\lceil t+d-\frac{d\log(\delta \tilde{C}_{n}/3)}{\log b}\bigg\rceil\geq t
\end{align}
and let $\{\bu^{i}\}_{i=0}^{b^{k_{n,\delta}}-1}$ be a $(t,k_{n,\delta},d)$ net. Then, under the assumptions of Lemma \ref{lem:dense_TV}, the point set $\{F_{K_n}^{-1}(\bx^{i},\bu^{i})\}_{i=0}^{b^{k_{n,\delta}}-1}$ contains at least $b^t$ points in the set $B_{\delta}(\bx')$ if $\bx^{i}\in B_{v_{n}(\delta)}(\tilde{\bx})$ for all $i\in 0:(b^{k_{n,\delta}}-1)$. 
\end{rem}

\begin{rem}
Conversely, if $\{\bu^{i}\}_{i=0}^{b^{k}-1}$ is a $(t,k,d)$ net in base $b$ for a $k\geq t+d$, then, under the assumptions of Lemma \ref{lem:dense_TV}, the point set $\{F_{K_n}^{-1}(\bx^{i},\bu^{i})\}_{i=0}^{b^{k}-1}$ contains at least $b^t$ points in the set $B_{\delta_{n,k}}(\bx')$ if $\bx^{i}\in B_{v_{n}(\delta_{n,k})}(\tilde{\bx})$ for all $i\in 0:(b^k-1)$, where
\begin{align}\label{eq:deltaVal}
\delta_{n,k}=3b^{\frac{t+d+1-k}{d}}\tilde{C}_{n}^{-1}.
\end{align}
\end{rem}

Before stating the last  preliminary result we introduce some  additional notation. For $k\in\mathbb{N}_{>0}$, we denote by $E(1/k)=\{E(j,1/k)\}_{j=1}^{k^{d}}$  the splitting of $[0,1]^d$ into closed hypercubes of side $1/k$ and by  $\tilde{E}(1/k)=\{\tilde{E}(j,1/k)\}_{j=1}^{k^{d}}$  the partition of $\ui^d$ into hypercubes of side $1/k$. Note that we need these two different sets of hypercubes because $\setX=[0,1]^d$ while the points of a $(t,s)_R$-sequence belong to $[0,1)^s$.

Next, under \ref{H:thm2_K4}, the following result provides a bound on the number of hypercubes belonging to $E(1/k)$ that are needed to cover the level sets of $\varphi$.

\begin{lemma}\label{lemma:bound_J}
Assume  \ref{H:thm2_K4}. Let $l<\varphi^*$ be a real number and, for $p\in\mathbb{N}_{>0}$, let  $\epsilon_p=2^{-p}$, $\delta_p=2^{-p-1}$ and  $P^l_{p}\subseteq E(\delta_p)$ be the smallest coverage of $(\setX_{l})_{\epsilon_p}$ by hypercubes in $E(\delta_p)$; that is, $|P^l_{p}|$ is the smallest integer in $1:\delta_p^{-d}$ such that  $(\setX_{l})_{\epsilon_p}\subseteq \cup_{W\in P^l_{p}} W$. Let $J^l_{p}\subseteq 1:\delta_p^{-d}$ be such that $j\in J^l_{p}$ if and only if $E(j,\delta_p)\in P^l_{p}$. Then,  there exists a $p_1^*\in\mathbb{N}$ such that, for all $p> p_1^*$, we have 
\begin{align}\label{eq:delta}
|J^l_{p}|\leq  \bar{C}\delta_{p}^{-(d-1)}
\end{align}
  where $\bar{C}<\infty$ is independent of $l$ and $p$.
\end{lemma}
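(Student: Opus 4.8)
The plan is to convert the combinatorial quantity $|J^l_p|$ --- the number of side-$\delta_p$ grid hypercubes needed to cover $(\setX_l)_{\epsilon_p}$ --- into a pure volume estimate, and then read off the $(d-1)$-scaling from the finiteness of the Minkowski content. First I would exploit the \emph{minimality} of the cover $P^l_p$: if some $W\in P^l_p$ did not meet $(\setX_l)_{\epsilon_p}$ it could be discarded, so every $W\in P^l_p$ satisfies $W\cap(\setX_l)_{\epsilon_p}\neq\emptyset$. Picking $z\in W\cap(\setX_l)_{\epsilon_p}$ and a witness $x'\in\setX_l$ with $\|z-x'\|_\infty\le\epsilon_p$, the $\ell_\infty$-diameter $\delta_p$ of $W$ yields $\|w-x'\|_\infty\le\delta_p+\epsilon_p$ for every $w\in W$. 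Since $\epsilon_p=2\delta_p$ by construction, this reads $W\subseteq(\setX_l)_{3\delta_p}$, and hence $\bigcup_{W\in P^l_p}W\subseteq(\setX_l)_{3\delta_p}$.

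The hypercubes of $E(\delta_p)$ have pairwise disjoint interiors and common volume $\delta_p^d$, so the union above has Lebesgue measure exactly $|J^l_p|\,\delta_p^d$. Combining this with the inclusion gives
$$
|J^l_p|\,\delta_p^d\le \lambda_d\big((\setX_l)_{3\delta_p}\big),
$$
which I would rearrange as
$$
|J^l_p|\le 3\,\delta_p^{-(d-1)}\,\frac{\lambda_d\big((\setX_l)_{3\delta_p}\big)}{3\delta_p}.
$$
Writing $M^*=\sup_{x\in\setX:\varphi(x)<\varphi^*}M(\setX_{\varphi(x)})$, which is finite by \ref{H:thm2_K4}, the definition of the Minkovski content gives $\lambda_d((\setX_l)_\eta)/\eta\to M(\setX_l)\le M^*$ as $\eta\downarrow 0$ (the case $\setX_l=\emptyset$, i.e. $l$ outside the range of $\varphi$, is trivial since then $|J^l_p|=0$). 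Hence there is a threshold $p_1^*$ beyond which $\lambda_d((\setX_l)_{3\delta_p})/(3\delta_p)\le M^*+1$, and the claim follows with $\bar C=3(M^*+1)$.

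The only genuinely delicate point is the uniformity of the constant $\bar C$ in $l$: this is precisely what the \emph{supremum} over all level sets in \ref{H:thm2_K4} buys, since it replaces the $l$-dependent limit $M(\setX_l)$ by the single bound $M^*$. I would state clearly that, with $l$ fixed at the outset, the threshold $p_1^*$ may depend on $l$ through the speed at which the ratio $\lambda_d((\setX_l)_\eta)/\eta$ approaches its limit, whereas $\bar C=3(M^*+1)$ does not. The remaining ingredients --- the minimality argument and measure-additivity over the disjoint grid cells --- are routine; the main thing to get right is the geometric containment $W\subseteq(\setX_l)_{3\delta_p}$, where the chosen relation $\epsilon_p=2\delta_p$ between the tube radius and the mesh size is exactly what makes the enlargement factor a harmless constant rather than growing with $p$.
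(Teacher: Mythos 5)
Your argument is correct and is essentially the same volume-counting computation that the paper defers to He (2014) and to the proof of Lemma 7 in Gerber and Bornn: minimality forces every retained cube to meet the tube $(\setX_l)_{\epsilon_p}$, the inclusion $W\subseteq(\setX_l)_{3\delta_p}$ (using $\epsilon_p=2\delta_p$ and the $\ell_\infty$-diameter $\delta_p$ of the cells) converts the count into $\lambda_d\big((\setX_l)_{3\delta_p}\big)/\delta_p^d$, and the finite Minkowski content then yields the $\delta_p^{-(d-1)}$ bound with $\bar C=3(M^*+1)$. Your observation that only $\bar C$, and not necessarily $p_1^*$, is uniform in $l$ is consistent with the statement as written, so nothing further is needed.
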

\begin{proof}
See \citet{He2014} and the computations in the proof of \citet[][Lemma 7]{QMC-SA}.
\end{proof}

%As shown in the proof of Theorem \ref{thm:timeConv}, for any $R\in\mathbb{R}$ the sequence $\big(\varphi(\bbx_R^n(\omega))\big)_{n\geq }$ is $\P$-almost surely convergent. Under \ref{H:thm2_K4}, by taking $\epsilon'\in(0,\epsilon\wedge b^{-2R})$, it is easily checked that, under \ref{H:K4} and using similar computations as in the proof of Theorem \ref{thm:timConvSA},  the sequence $\big(\bbx_R^n(\omega)\big)_{n\geq 1}$ visits the set $\tilde{\setX}=[\epsilon',1-\epsilon']^d$ infinitely many times. Hence, under \ref{H:thm2_K4}, with $\P$-probability one and for all $n$ large enough, we have $\bbx_R^n(\omega)\in \tilde{\setX}$. To simplify the presentation in what follow we do as if $\tilde{\setX}=\setX$.

To conclude this preliminary section we proceed with some further remarks and notation.

%Let $\setX=[0,1]^d$ and $(\tilde{\bx},\bx')\in\setX^2$ be such that there exists a $\delta_0>0$ which verifies $B_{\delta_0}(\tilde{\bx})\cap B_{\delta_0}(\bx')=\emptyset$ and $B_{\delta_0}(\tilde{\bx})\cup B_{\delta_0}(\bx')\in\interior{\setX}$. 

Under \ref{H:K4}, the sequence $(C_{n}/\tilde{K}_{n})_{n\geq 1}$ is bounded above by a constant $C<\infty$. Thus, the sequence $(\tilde{K}_{n}/C_{n})_{n\geq 1}$ is bounded below by $C^{-1}>0$ and, consequently, there exists a constant $C_{v}\leq 1$ such that $v_{n}(\delta)\geq v(\delta):=C_{v}\delta$ for all $\delta>0$ and  $n\geq 1$, where $v_{n}(\cdot)$ is as in Lemma \ref{lem:dense_TV}.  In addition, under \ref{H:K4},  the sequence $(\tilde{K}_{n})_{n\geq 1}$ is bounded and therefore there exists a $\bar{\delta}>0$ satisfying $\bar{\delta}\wedge\delta_0\leq \bar{\delta}_{n,\delta_0}$ for all $n\geq 1$, where $\bar{\delta}_{n,\delta_0}$  is as in Lemma \ref{lem:dense_TV}.
%To simplify the notation we assume in what follows that $\bar{\delta}_{\delta_0}=0.5\delta_0$.
Next, for the proof of Theorem \ref{thm:timeConv_Univ} it will be useful to note that, under \ref{H:K4b},  $b^{-k/d}/\tilde{K}_{b^k}\cvz$ as $k\rightarrow\infty$ and thus $\delta_{k+1,b^k}\cvz$ as $k\rightarrow\infty$, with $\delta_{k+1,b^k}$ given by \eqref{eq:deltaVal}.% Hence, for all $k$ large enough and under  \ref{H:K4b},  $\delta_{k+1,b^k}\in(0,\bar{\delta}]$. 

In what follows, we use the shorthand $r^*=dR+t$ and the integers $N^*$ and $m^*$ are   such that $k_{R,m_n}=r_nb^{r^*}$ for all $n>N^*$ and $mb^{r^*}>N^*$ for all $m>m^*$. For $m\in\mathbb{N}$, we use the shorthand $I_m=\{mb^{r^*},\dots,(m+1)b^{r^*}-1\}$.

From henceforth, we fix  $\varphi\in\mathcal{F}(\setX)$ and $x_0\in\setX$, and define, for $(\omega,n)\in\Omega\times \mathbb{N}_{>0}$,
$$
\bbx_R^n(\omega)=\phi_{\varphi,1:n}\big(\bx^0,\bbu_R^{1:n}(\omega)\big),\quad Y_R^n(\omega)=y_n\Big(X_R^{n-1}(\omega), U_{R,1:d}^{n}(\omega)\big)
$$
and $\varphi_R^n(\omega)=\varphi\big(\bbx_R^n(\omega)\big)$.

\subsection{Auxiliary results}

The following two lemmas are the key ingredients to establish Theorem \ref{thm:timeConv}.

\begin{lemma}\label{lemma:A_set}
Assume \ref{H:X}, \ref{H:thm2_K4}, \ref{H:thm2_K1}-\ref{H:K4}. Let  $m\in\mathbb{N}$  and, for  $p\in\mathbb{N}_{>0}$ and $R\in\mathbb{N}$, let
\begin{align*}
\Omega^{p}_{R, m}&= \Big\{\omega\in\Omega:\,\exists n\in I_m,\,\,\,\bbx_R^{n}(\omega)\not\in \cup_{j\in N_{j^\omega_{mb^{r^*}-1}}}E(j,\delta_{p})\Big\}\\
&\cap \Big\{\omega\in\Omega:\,\forall n\in I_m,\,\, \bbx_R^{n}(\omega)\in\big(\setX_{\varphi_R^{mb^{r^*}-1}(\omega)}\big)_{\epsilon_p},\,\, \varphi_R^{mb^{r^*}-1}(\omega)<\varphi^*\Big\}
\end{align*}
where $\delta_{p}$ and $\epsilon_p$ are as in Lemma \ref{lemma:bound_J}, $i\in N_{j^\omega_{mb^{r^*}-1}}\subset 1:\delta_{p}^{-d}$ if and only if there exists a $j\in j^{\omega}_{mb^{r^*}-1}$ such that $E(i,\delta_{p})$ has one edge in common with $E(j,\delta_{p})$. For $n\geq 1$ and $\omega\in\Omega$, $j^{\omega}_{n}\subset 1:\delta_{p}^{-d}$ is such that $\bbx_R^{n}(\omega)\in E(j,\delta_{p})$ for all $j\in j^{\omega}_{n}$. Let $\Omega^{p}_{R,\infty}=\cup_{i\geq 1}\cap_{m\geq i}\Omega^{p}_{R,m}$. Then, for all $R\in\mathbb{N}$, there exists a $p_2^*\in\mathbb{N}$ such that, for  all $p> p_2^*$, $\P(\Omega^{p}_{R, \infty})=0$.
\end{lemma}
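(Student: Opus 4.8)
The plan is to run a conditional Borel--Cantelli argument along the blocks $I_m$. First I would introduce the block filtration $\mathcal{G}_m=\sigma\big(\bbu_R^0,\dots,\bbu_R^{(m+1)b^{r^*}-1}\big)$. Since $\Omega^p_{R,m}$ is determined by the values $\bbx_R^n$ for $n\in\{mb^{r^*}-1\}\cup I_m$, it is $\mathcal{G}_m$-measurable, while $\mathcal{G}_{m-1}$ already fixes the starting state $\bx_\star:=\bbx_R^{mb^{r^*}-1}$, the associated level $l:=\varphi_R^{mb^{r^*}-1}$, the reference cells $j^\omega_{mb^{r^*}-1}$ and their neighbours $N$. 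I would then show that there is a constant $\rho\in(0,1)$ and a $p_2^*\in\mathbb{N}$ such that, for every $p>p_2^*$ and every $m$ large enough,
\begin{equation*}
\P\big(\Omega^p_{R,m}\mid\mathcal{G}_{m-1}\big)\leq\rho,\qquad\P\text{-a.s.}
\end{equation*}
Given this, successive conditioning yields $\P\big(\cap_{m=i}^{N}\Omega^p_{R,m}\big)\leq\rho^{\,N-i}$ for $i$ large enough, hence $\P\big(\cap_{m\geq i}\Omega^p_{R,m}\big)=0$ for all such $i$, and therefore $\P(\Omega^p_{R,\infty})=\P\big(\cup_i\cap_{m\geq i}\Omega^p_{R,m}\big)=0$. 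On the event $\{l=\varphi^*\}$ one has $\Omega^p_{R,m}=\emptyset$ by the second half of its definition, so the bound is only needed on $\{l<\varphi^*\}$.

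To produce the per-block bound I would work conditionally on $\mathcal{G}_{m-1}$ and exploit two structural features of the block. By \ref{H:thm2_K1} the kernel is constant on $I_m$, equal to some $K^{(m)}$, so the whole block uses one transition law. By the structure of the $(t,d)_R$-sequence recalled after Definition \ref{def:R}, the proposal inputs $\{U^n_{R,1:d}\}_{n\in I_m}$ form a $(t,r^*,d)$-net that places exactly $b^t$ points, each independent and uniformly distributed, in every sub-hypercube of side $b^{-R}$ partitioning $\ui^d$. On $\Omega^p_{R,m}$ the chain remains in the thin tube $(\setX_l)_{\epsilon_p}$ throughout the block while travelling from its starting cell to a non-adjacent cell; in particular, at the first index of $I_m$ at which a non-adjacent cell is reached, an accepted proposal has moved a distance at least $\delta_p$ while staying inside the tube.

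The core of the argument is to rule out this confined macroscopic jump with conditional probability bounded below. Here I would take $p_2^*\geq p_1^*$ so that Lemma \ref{lemma:bound_J} applies and $(\setX_l)_{\epsilon_p}$ is covered by at most $\bar C\,\delta_p^{-(d-1)}$ cells of side $\delta_p$ uniformly in $l<\varphi^*$; thus the tube is thin relative to the scale of a macroscopic jump. On the other hand, by Lemma \ref{lem:dense_TV} and \ref{H:K2}--\ref{H:K4} the transition $K^{(m)}$ from any admissible state assigns mass bounded below, of order $\delta_p^{d}$ up to the constants appearing there, to balls of radius comparable to $\delta_p$ around well-separated targets (the remarks following Lemma \ref{lem:dense_TV} turn this into the guarantee that a $(t,k,d)$-net places at least $b^t$ points in such balls). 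Combining the two facts, the proposal ball attached to a macroscopic jump cannot be contained in the tube: a fraction bounded away from $0$ of its mass falls outside $(\setX_l)_{\epsilon_p}$ into the region $\{\varphi>l+c_p\}$, where $c_p>0$ is fixed by the continuity modulus of $\varphi$ at scale $\epsilon_p$. Any such proposal is accepted and forces $\bbx_R^n\notin(\setX_l)_{\epsilon_p}$. Since the net furnishes $b^t$ independent, well-placed proposals within the block, the chance that no such escape ever occurs is at most $\rho:=(1-\eta)^{b^t}<1$ for some $\eta>0$ uniform in $m$ and in the admissible states; as an escape contradicts membership in $\Omega^p_{R,m}$, this is the desired bound.

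The hard part will be the uniformity of this last estimate. One must guarantee, simultaneously for every sub-optimal level $l<\varphi^*$ and every state the chain may occupy inside the tube during the block, that a fixed fraction of the spread-out proposal mass reaches the strictly higher region $\{\varphi>l+c_p\}$ and is therefore accepted. This is exactly where the Minkovski-content hypothesis \ref{H:thm2_K4} enters: it controls, uniformly in $l$, how much of a neighbourhood the level set can occupy, so that a macroscopic jump across the thin tube necessarily exposes a definite amount of super-level mass; the lower bound \ref{H:K2} and the Lipschitz-type control \ref{H:K3}--\ref{H:K4} then convert this geometry into the uniform probability $\eta$. Pinning down $c_p$ and $\eta$ from these ingredients, and checking that $p_2^*$ can be chosen independently of $l$ and $m$, is the step requiring the most care.
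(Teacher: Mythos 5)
Your outer skeleton --- a per-block bound $\P(\Omega^p_{R,m})\leq\rho<1$ for $m$ large, followed by the product $\rho^j$ over $j$ consecutive blocks --- is exactly the paper's. But the mechanism you use to obtain the per-block bound is not, and as stated it does not work. You try to produce a \emph{lower} bound on the probability of escaping the tube $(\setX_l)_{\epsilon_p}$, arguing that a confined macroscopic jump exposes a definite fraction of proposal mass to $\{\varphi>l+c_p\}$, where it is automatically accepted. Two things break. First, acceptance is automatic only for uphill moves: nothing forces the mass leaving the tube to lie on the super-level side (it can lie in $\{\varphi<l\}$, where the acceptance probability decays to zero with $T_n$), and even mass in $\{\varphi>l+c_p\}$ need not be uphill from the \emph{current} state unless $c_p$ dominates the oscillation of $\varphi$ over the tube, which mere continuity does not give uniformly in $l$. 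Second, even for targets that are guaranteed uphill, the only available lower bound on proposal mass is of order $\tilde K_n^d$, and \ref{H:K4} permits $\tilde K_n^d\sim 1/n$; the per-step escape probability therefore vanishes as $m\to\infty$ and no $\eta>0$ uniform in $m$ exists, so $\rho=(1-\eta)^{b^t}<1$ cannot be obtained this way. (A block-escape probability of order $1/m$ would still make the infinite product vanish, but the constants would then depend on the random level $l$ through the size of its super-level set, and the uniform choice of $p_2^*$ asserted in the lemma is lost.)

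The paper's proof goes in the opposite direction: it is an \emph{upper} bound on the bad event and never needs any proposal to be accepted. A necessary condition for $\omega\in\Omega^p_{R,m}$ is that some $\bbu_R^{n}$, $n\in I_m$, falls in the union over the covering cells $j'\in J^l_{p-\kappa}$ of the hypercubes $\bar W_{(m+1)b^{r^*}}(\bar{\bx}_{p}^{j},\bar{\bx}_{p}^{j'},\delta_{p-\kappa})$ of Lemma \ref{lem:dense_TV}. Each such hypercube has volume at most $\big(2.5\,\delta_{p-\kappa}\bar K_{(m+1)b^{r^*},\delta_{p_2^*}}\big)^d$ --- this is where the upper density bound $\bar K_{n,\delta_0}$ of \ref{H:K3}--\ref{H:K4}, which your argument never actually uses, is essential --- and Lemma \ref{lemma:bound_J} caps the number of such hypercubes by $\bar C\delta_{p-\kappa}^{-(d-1)}$ uniformly in $l<\varphi^*$. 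Multiplying, inserting the factor $b^{dR}$ coming from the conditional law of $\bbu_R^n$ given its deterministic digits, and union-bounding over the $b^{r^*}$ indices of the block and the at most $2^d$ starting cells yields a bound of order $\bar K^d\delta_{p-\kappa}$, which is made smaller than a fixed $\rho<1$ by taking $p_2^*$ large; uniformity in $l$ comes for free from Lemma \ref{lemma:bound_J}. Replacing your escape argument by this bound on the measure of the preimage of the thin tube at macroscopic distance is the missing idea.
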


\begin{proof}
Let $R\in\mathbb{N}$, $p_2^*\geq p_1^*$, with $p_1^*$  as in Lemma \ref{lemma:bound_J}, and choose $p\geq p_2^*$  so that  $\epsilon_p\in (0,\epsilon_{p_2^*}]$. Let   $a^{(p)}\in\mathbb{N}$ be such that $a^{(p)}b^{r^*}\geq  N^*$. We now bound $\P(\Omega_m^{p})$ for a $m\geq a^{(p)}$.

We first remark that there exists a  non-negative integer $1\leq \kappa<\infty$ that depends only on $d$  such that, for all $j\in\delta^{-p}$, the set $\cup_{i\in N_j}E(i,\delta_{p})$ is included in a closed hypercube $E(k^*_{j},\delta_{p-\kappa})$.  We assume that $p>\kappa$ from henceforth. Note that each hypercube in $E(\delta_{p-\kappa})$ can be written as the union of $2^\kappa$ hypercubes in $\delta_{p-\kappa}$. For hypercube  $E(j,\delta_{p-\kappa})\in E(\delta_{p-\kappa})$ we denote by $S_j\subset 1:\delta^{\kappa-p}$ the set containing $2^\kappa$ elements such that $ E(j,\delta_{p-\kappa})=\cup_{i\in S_j}E(j,\delta_{p})$. 

Next, let   $k_{p_2^*}\in\mathbb{N}$  be such that  that there exists a $\delta^*\in (0,\bar{\delta}\wedge \delta_{k_{p_2^*}}]$ for which $v(\delta_{k_{p_2^*}})= \delta_{p_2^*}$.  Note that this implies that, for any $p\geq p_2^*$,   $v_{n}(2^{p_2^*-p}\delta^*)\geq \delta_{p}$  and $2^{p_2^*-p}\delta^*\in \big(0,2^{p_2^*-p}(\bar{\delta} \wedge \delta_{k_{p_2^*}})\big]$ for all $n\geq 1$. In what follows we choose $\kappa$  so that $\delta_{p-\kappa}\geq  2^{p_2^*-p}(\bar{\delta} \wedge \delta_{k_{p_2^*}})$;  note that    $\kappa$   does not depend on $p$.

For $k\geq 1$ and $j\in 1:\delta_{k}^{-d}$, let $\bar{\bx}_{k}^j$ be the center of $E(j,\delta_{k})$ and define, for $l<\varphi^*$ and $j\in 1:\delta^{-p}$,
\begin{align*}
\bar{W}^l_{(m+1)b^{r^*}}(j,\delta_{p})&=\bigcup_{j'\neq k^*_j,\, j'\in J^l_{p-\kappa}}\bigcup_{i\in S_{j'}} \bar{W}_{(m+1)b^{r^*}}(\bar{\bx}_{p}^{j},\bar{\bx}_{p}^{i},\delta_{p})\\
&=\bigcup_{j'\neq k^*_j,\, j'\in J^l_{p-\kappa}} \bar{W}_{(m+1)b^{r^*}}(\bar{\bx}_{p}^{j},\bar{\bx}_{p}^{j'},\delta_{p-\kappa})
\end{align*}
where, for $n\geq 1$, $\bar{W}_{n}(\cdot,\cdot,\cdot)\subset\ui^d$  is as in    Lemma \ref{lem:dense_TV} and $J^l_{p-\kappa}$ is as in Lemma \ref{lemma:bound_J}. 

Then, under \ref{H:X}, \ref{H:thm2_K1}-\ref{H:K4}, and by Lemma \ref{lem:dense_TV} a necessary condition to have $\omega\in \Omega^{p}_{R, m}$
is that there exists a $n\in I_m$ such that 
$$
\bbu_R^{n}(\omega)\in W^l_{(m+1)b^{r^*}}(j^{\omega}_{mb^{r^*}-1},\delta_{p}).
$$
%, note that without loss of generality we can do as if $j^{\omega}_{mb^{r^*}-1}$ is a singleton. 

Let   $k^{(p)}$ be the largest  integer $k\geq t$ such that $(k-t)/d$ is an integer and such that $b^{k}\leq (2.5\delta_{p-\kappa})^{-d}b^t$, and let $\bar{k}^{(p^*_2)}_{m}$ be the largest integer $k$ which verifies $ b^{k}\leq \bar{K}_{(m+1)b^{r^*},\delta_{p_2^*}}^{-d}$ and such that $k/d$ is an integer. Notice that,  under \ref{H:K4}, $\bar{K}_{k,\delta_{p^*_2}}\rightarrow 0$ a $k\rightarrow\infty$, and therefore we have $\bar{k}^{(p^*_2)}_{m}\rightarrow\infty$ as $m\rightarrow\infty$. Let $k_{m}^{(p)}=k^{(p)}+\bar{k}^{(p_2^*)}_{m}$ and note that, since $\delta_{p-\kappa} \leq\delta_{p^*_2}$ (if necessary one can increase $p$), 
$$
\bar{K}_{(m+1)b^{r^*},\delta_{p-\kappa}}\leq \bar{K}_{(m+1)b^{r^*},\delta_{p_2^*}}.
$$
Consequently, together with Lemma \ref{lem:dense_TV}, this shows that, under \ref{H:X}, \ref{H:thm2_K1}-\ref{H:K4},  the volume of the closed hypercube $\bar{W}_{(m+1)b^{r^*}}(\bar{\bx}_{p}^{j},\bar{\bx}_{p}^{j'},\delta_{p-\kappa})$ is bounded by 
$$
\big(2.5\delta_{p-\kappa}\bar{K}_{(m+1)b^{r^*},\delta_{p_2^*}}\big)^d\leq b^{t-k_{m}^{(p)}}.
$$
Hence, for $j\neq j'$,  $\bar{W}_{(m+1)b^{r^*}}(\bar{\bx}_{p}^{j},\bar{\bx}_{p}^{j'},\delta_{p-\kappa})$  is covered by at most $2^d$ hypercubes of $\tilde{E}(b^{t-k_{m}^{(p)}})$ and thus, for all $j\in J^l_{p}$, $\bar{W}^l_{(m+1)b^{r_{d,r}}}(j,\delta_{p})$ is covered by at most $2^d|J^l_{p-\kappa}|$ hypercubes of $\tilde{E}(b^{(t-k_{m}^{(p)})/d})$.

Take $a^{(p)}$  large  enough so that  $k_{m}^{(p)}>t+dR$ for all $m\geq a^{(p)}$. Then, using the same computations as in \citet[][Lemma 7]{QMC-SA}, we have, for $m\geq a^{(p)}$,
$$
\P\Big(\bbu_R^{n}\in \tilde{E}\big(k,b^{(t-k_{m}^{(p)})/d}\big)\Big)\leq b^{t-k_{m}^{(p)}+dR},\quad \forall k\in 1:b^{k_m^{(p)}-t},\quad  \forall n\in I_m
$$
and thus, under \ref{H:thm2_K4}, using Lemma \ref{lemma:bound_J} (recall that $p_2^*\geq p_1^*$) and the definition of $k_{m}^{(p)}$, we obtain that, for all $j\in J^l_{p}$, $m\geq a^{(p)}$, $l<\varphi^*$ and  $n\in I_m$,
$$
\P\Big(\bbu_R^{n}\in W^l_{(m+1)b^{r^*}}(j,\delta_{p})\Big)\leq 2^d|J^l_{p-\kappa}|b^tb^{t-k_m^{(p)}+dR}\leq \bar{K}_{(m+1)b^{r^*},\delta_{p^*_2}}^{d}C^*\delta_{p-\kappa},
$$
with $ C^*=5^d\bar{C}b^{t+2} b^{dR}$ and $\bar{C}<\infty$ as in Lemma \ref{lemma:bound_J}. Thus, for $m\geq a^{(p)}$ and $l<\varphi^*$, and noting that,  the set $j^{\omega}_{mb^{r^*}-1}$ contains at most $2^d$ elements, we deduce that
$$
\P\Big(\omega\in \Omega^{p}_{R,m}\big | \varphi_R^{mb^{r^*}-1}(\omega)=l\Big)\leq 2^db^{r^*} \bar{K}_{(m+1)b^{r^*},\delta_{p^*_2}}^{d} C^*\delta_{p-\kappa}.
$$
Let $\rho\in (0,1)$. Then, because under \ref{H:K4} the sequence ($\bar{K}_{n,\delta_{p^*_2}})_{n\geq 1}$ is bounded, one can take $p_2^*\in\mathbb{N}$ large enough so that, for all integers $p>p^*_2$ and $m\geq a^{(p)}$, 
$$
\P\Big(\omega\in \Omega^{p}_{R,m}| \varphi_R^{mb^{r^*}-1}(\omega)=l\Big)\leq \rho,\quad\forall l<\varphi^*
$$
so that $\P(\Omega^{p}_{R,m})\leq \rho$ for $p>p^*_2$ and $m\geq a^{(p)}$.
 
To conclude the proof, let $j \geq 1$, $p> p^*$ and $\Omega^{p}_{R,m,j}=\cap_{i=0}^{j-1}\Omega^{p}_{R, m+j}$. Then, it is easily verified that  $\P(\Omega^{p}_{R,m,j})\leq \rho^j$ and,  
consequently, for all $p\geq p_2^*$, $\P(\Omega^{p}_{R,\infty})=0$, as required.
\end{proof}

\begin{lemma}\label{lemma:tildeOmega}
Assume \ref{H:X}-\ref{H:thm2_K4}. Let $R\in\mathbb{N}$ and $x^*\in\setX$ be such that $\varphi^*=\varphi(x^*)$. For $p\in\mathbb{N}_{>0}$, let $\delta_p:=2^{-p-1}$ and
$$
S_p=\{j\in 1:\delta_d^{-p}: \lambda_d\big(B_{2\delta_{p}}(\bar{x}_p^j)\cap B_{2\delta_{p}}(x^*)\big)=0\}
$$
with $\bar{x}_p^j\in\setX$  the center of $E(j, \delta_p)$. Then, we define 
\begin{align*}
\tilde{\Omega}^{p}_{R}=\bigcap_{j\in S_p}\Big\{&\omega\in\Omega:  \,\,\exists n\in I_m,\\
&\,\bbu_R^{n}(\omega)\in \underline{W}_{(m+1)b^{r^*}}\big(\bar{x}^j_{p}, x^*, \delta_{p}\big)\text{ for infinitely many }m\in\mathbb{N}\Big\}
\end{align*}
 with $\underline{W}_{n}(\cdot,\cdot,\cdot)$  as in Lemma \ref{lem:dense_TV}.
Then, for all $R\in\mathbb{N}$ there exists a $p_3^*\in\mathbb{N}$ such that, all $p>p_3*$,
$$
\P\big(\tilde{\Omega}^{p}_{R}\big)=1.
$$

\end{lemma}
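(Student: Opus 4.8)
The plan is to run a Borel--Cantelli argument on independent blocks. Since $S_p\subseteq 1:\delta_p^{-d}$ is a finite index set, $\P(\tilde{\Omega}^{p}_{R})=1$ will follow once we show that, for each fixed $j\in S_p$, the block event
$$
A_m^{(j)}=\Big\{\exists\,n\in I_m:\ \bbu_R^{n}\in \underline{W}_{(m+1)b^{r^*}}\big(\bar{x}_p^j,x^*,\delta_p\big)\Big\}
$$
occurs for infinitely many $m$, $\P$-a.s. First I would fix $p_3^*$ large enough that $\delta_p\le\bar{\delta}$ for all $p>p_3^*$, with $\bar{\delta}$ as in the preliminary remarks following Lemma \ref{lemma:bound_J}, so that Lemma \ref{lem:dense_TV} applies with $\delta=\delta_0=\delta_p$. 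The defining property of $S_p$, namely $\lambda_d\big(B_{2\delta_p}(\bar{x}_p^j)\cap B_{2\delta_p}(x^*)\big)=0$, is exactly the separation hypothesis of that lemma, so each $\underline{W}_m:=\underline{W}_{(m+1)b^{r^*}}(\bar{x}_p^j,x^*,\delta_p)$ is a well-defined closed hypercube of side $\underline{S}_m:=\delta_p\,\tilde{C}_{(m+1)b^{r^*}}$.

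I would next exploit the block independence. In the (non-scrambled) construction of Definition \ref{def:R} the points $\bbu_R^{n}$ depend on pairwise disjoint blocks of the underlying sequence $\omega$ and are therefore mutually independent; in particular the events $(A_m^{(j)})_{m\ge 0}$, each measurable with respect to $\{\bbu_R^{n}:\,n\in I_m\}$, are independent across $m$. Hence by the second Borel--Cantelli lemma it suffices to prove $\sum_{m}\P(A_m^{(j)})=\infty$.

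The core is a lower bound on $\P(A_m^{(j)})$. By \ref{H:R1} and the $(t,d)_R$-structure read off the deterministic $(t,d)$-sequence, $\{\bbu_{R,1:d}^{n}\}_{n\in I_m}$ contains exactly $b^t$ points in each of the $b^{dR}$ cubes of side $b^{-R}$ partitioning $\uid$, these points being independent and uniform on their cube. A slab/halving argument shows that some partition cube $H^*$ satisfies $\lambda_d(\underline{W}_m\cap H^*)\ge \big(\tfrac12\min(\underline{S}_m,b^{-R})\big)^d$, whence, using $1-(1-q)^{b^t}\ge q$,
$$
\P\big(A_m^{(j)}\big)\ \ge\ b^{dR}\,\lambda_d\big(\underline{W}_m\cap H^*\big)\ \ge\ 2^{-d}\min\big(b^{dR}\underline{S}_m^{\,d},\,1\big).
$$
Finally, \ref{H:K4} gives $\tilde{C}_n\ge c\,\tilde{K}_n\ge c'\,n^{-1/d}$ for constants $c,c'>0$ (since $C_{n}/\tilde{K}_n=\bigO(1)$ and $n^{-1/d}/\tilde{K}_n=\bigO(1)$), so that $b^{dR}\underline{S}_m^{\,d}\ge (\delta_p c')^d b^{-t}(m+1)^{-1}$ using $r^*=dR+t$. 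As this tends to $0$, the minimum equals $b^{dR}\underline{S}_m^{\,d}$ for all large $m$, and $\sum_m(m+1)^{-1}=\infty$ forces the series to diverge. Intersecting over the finitely many $j\in S_p$ then yields $\P(\tilde{\Omega}^{p}_{R})=1$.

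The step I expect to be the main obstacle is making the geometric/measure-theoretic lower bound on $\P(A_m^{(j)})$ fully rigorous: one must justify the net-plus-uniform structure of $\{\bbu_R^{n}\}_{n\in I_m}$ over a block of size $b^{r^*}=b^{dR+t}$ (obtained from the $(t,d)$-sequence directly rather than from the range $m\in t{:}R$ quoted in Section \ref{sub:SA_R}) and control $\lambda_d(\underline{W}_m\cap H^*)$ uniformly in $m$ even though the kernel, hence the side $\underline{S}_m$, varies with $m$. Once the bound $\tilde{C}_n\gtrsim n^{-1/d}$ is secured, the reduction to a divergent harmonic series together with the across-block independence makes the Borel--Cantelli conclusion routine.
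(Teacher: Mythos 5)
Your proposal is correct and follows essentially the same route as the paper: decompose time into the blocks $I_m$, use the $(t,r^*,d)$-net structure to get points uniformly distributed in the $b^{dR}$ cells of side $b^{-R}$, lower-bound the hitting probability of $\underline{W}_{(m+1)b^{r^*}}(\bar{x}_p^j,x^*,\delta_p)$ by a volume of order $(\delta_p\tilde{C}_{(m+1)b^{r^*}})^d\gtrsim (m+1)^{-1}$ via \ref{H:K4}, and conclude from the divergence of the resulting series together with independence across blocks. The paper phrases the last step as $\prod_m\P(D_{p,m}(j))=0$ for the complementary events and locates a full $b$-adic elementary cell inside $\underline{W}$ rather than using your $\min(\underline{S}_m,b^{-R})$ overlap bound, but these are only cosmetic differences.
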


\begin{proof}

Let $R\in\mathbb{N}$, $m>m^*$, and $p_3^*$ be such that, for all $p>p_3^*$ and $m>m^*$,
$$
k_{(m+1)b^{r^*},\delta_p}\geq t+d+dR
$$
where, for $n\in\mathbb{N}$ and $\delta>0$, $k_{n,\delta}$ is defined in \eqref{eq:kVal}; note that such a $p_3^*$ exists since, under \ref{H:K4}, the quantity $\tilde{C}_{n}$ that enters in the definition of $k_{n,\delta}$ is bounded uniformly in $n$, and thus, $k_{n,\delta}$ can be made arbitrary large by reducing $\delta$.

Next, for $m\geq 1$, $p>p_3^*$ and $j\in S_p$ (with $S_p$ as in the statement of the lemma), let
$$
D_{p,m}(j):=\Big\{\omega\in\Omega:\,\,\forall n\in I_m,\,\,\bbu_R^{n}(\omega)\not\in \underline{W}_{(m+1)b^{r^*}}\big(\bar{x}_p^j,\bx^*,\delta_p\big)\Big\}.
$$
Then,  to show the lemma  it is enough to prove that, for any $p>p_3^*$, $i\geq 1$  and  $j\in S_p$,
$$
\prod_{m=i}^{\infty}\P\big(D_{p,m}(j)\big)=0.
$$

To this end, remark first that, using the definition of $r^*$, the point set 
$$
P_{m,r^*}:=\{\bu_{\infty}^{n}\}_{n=mb^{r^*}}^{(m+1)b^{r^*}-1}
$$
is a $(t,r^*,d)$-nets in base $b$ which contains, for all $j\in 1:b^{r^*-t}$,  $b^t\geq 1$ points in $\tilde{E}(j,b^{(t-r^*)/d})$.  Consequently, for all $j\in 1:b^{dR}$, $P_{m,r^*}$ has     $b^tb^{r^*-t-dR}=b^{r^*-dR}\geq 1$ points in $\tilde{E}(j,b^{-R})$ and thus, $\P$-a.s., for all $j\in 1:b^{dR}$ the point set  $\{\bbu_R^{n}(\omega)\}_{n=mb^{r^*}}^{(m+1)b^{r^*}-1}$ contains $b^{r^*-dR}$ points in  $\tilde{E}(j,b^{-R})$.   Recall that, for all $n\in I_{m}$, $\bbu_R^{n}$ is uniformly distributed in $\tilde{E}(j_{n},b^{-R})$. 

Next, easy computations shows that, for any $j\in S_p$,  $\underline{W}_{(m+1)b^{r^*}}(\bar{x}_p^j,x^*,\delta_p)$ contains at least one hypercube of the set $\tilde{E}\big(b^{(t_{p}-k_{(m+1)b^{r^*},\delta_p})/d}\big)$, where $t_{p}\in t:(t+d)$ is such that  $
(k_{(m+1)b^{r^*},\delta_p}-t_{p})/d\in\mathbb{N}$, and that each hypercube of the set $\tilde{E}(b^{-R})$ contains 
$$
b^{k_{(m+1)b^{r^*},\delta_p}-t_{p}-dR}\geq b^{k_{(m+1)b^{r^*},\delta_p}-t-d-dR}\geq 1
$$
hypercubes of the set $\tilde{E}\big(b^{(t_p-k_{(m+1)b^{r^*},\delta_p})/d}\big)$. Consequently, for a   $j'\in 1:b^{k_{(m+1)b^{r^*},\delta_p}-t_p}$, we have
\begin{align*}
\rho_{p,m}&:=\P\Big(\omega\in\Omega:\,\exists n\in I_m,\,\bbu_R^{n}(\omega)\in \underline{W}_{(m+1)b^{r^*}}(\bar{x}_p^j,x^*,\delta_d)\Big)\\
&\geq \P\Big(\omega\in\Omega:\,\exists n\in I_m,\,\bbu_R^{n}(\omega)\in E\big(j',b^{(t_p-k_{(m+1)b^{r^*},\delta_p})/d}\big)\Big)\\
&=1-\tilde{\rho}_{p,m}^{b^{r^*-dR}}
\end{align*}
where $\tilde{\rho}_{p.m}:=1-b^{dR+t_{p}-k_{(m+1)b^{r^*},\delta_p}}<1$. This shows that, for all $p>p_3^*$,  $m>m*$ and $j\in S_p$, $\P(D_{p,m}(j))\leq (1-\rho_{p,m})<1$.

To conclude the proof it remains to show that, for $p>p_3^*$, $\sum_{m=1}^{\infty}\log(1-\rho_{p,m})=-\infty$. To see this, remark first that
$$
\sum_{m=1}^{\infty}\log(1-\rho_{p,m})=\sum_{m=1}^{\infty}b^{r^*-dR}\log \tilde{\rho}_{p,m}=\sum_{m=1}^{\infty}b^{r^*-dR}\log \Big(1-b^{dR+t_{p}-k_{(m+1)b^{r^*},\delta_p}}\Big)
$$
where, under \ref{H:K4} and using \eqref{eq:kVal},  $b^{k_{(m+1)b^{r^*},\delta_p}}=\bigO(\tilde{K}_{(m+1)b^{r^*}}^{-d})$  and thus, under \ref{H:K4}, there exists a constant $0<C_p<\infty$ such that $-b^{-k_{(m+1)b^{r^*},\delta_p}}\leq -C_p\,(m+1)b^{r^*}$ for all $m\in\mathbb{N}$. Consequently, using similar computations as in the proof of Theorem \ref{thm:timConvSA}, we deduce that
\begin{align*}
\sum_{m=1}^{M}\log(1-\rho_{d,m})&\leq -b^{r^*-dR}\sum_{m=1}^{\infty}b^{dR+t_{p}-k_{(m+1)b^{r^*},\delta_p}}\\
&\leq -C_p\,b^{2r^*+t_{p}}\sum_{m=1}^{\infty} (m+1)\\
&=-\infty
\end{align*}
as required.
\end{proof}

\subsection{Proof of Theorem \ref{thm:timeConv}\label{p-thm:timeConv}}

Let $R\in\mathbb{N}$ and  $x^*\in\setX$ be such that $\varphi(x^*)=\varphi^*$; note that such a $x^*$ exists since, under \ref{H:X} and \ref{H:thm2_K4}, $\varphi$ is continuous on the compact set $\setX$.

Next,  under  \ref{H:R1}-\ref{H:R2} and the condition  on $(T_n)_{n\geq 1}$, by \citet[][Lemma  5]{QMC-SA} and by  Lemma \ref{lem:R}, there exists a set $\Omega_1\in\mathcal{B}(\Omega)$  such that $\P(\Omega_1)=1$ and such that, for all $\omega\in\Omega_1$, there exists a $\bar{\varphi}(\omega)\in\mathbb{R}$ satisfying $\lim_{n\rightarrow\infty}\varphi(\bbx_R^n(\omega))=\bar{\varphi}(\omega)$. 

Let $p_2^*\in\mathbb{N}$ and $\Omega^{p}_{R,\infty}$ be as in Lemma \ref{lemma:A_set},  $p_3^*\in\mathbb{N}$ be as in Lemma \ref{lemma:tildeOmega}, $p^*=p_2^*\vee p_3^*$, and define
$$
\Omega_2=\bigcap_{p\in\mathbb{N}:\,p> p^*}\Big(\setX\setminus \Omega^{p}_{R,\infty}\Big),\quad \Omega_3=\bigcap_{p>p^*}\tilde{\Omega}^{p}_{R}(x^*).
$$
Then, because  $\mathbb{N}$ is countable, $\P(\Omega_2)=\P(\Omega_3)=1$ by Lemmas \ref{lemma:A_set} and \ref{lemma:tildeOmega}.

Let $\Omega_1'=  \Omega_2 \cap\Omega_3$,
which is such that $\P(\Omega_1')=1$. Consequently, to establish the result it is enough to show that
$$
\bar{\varphi}(\omega)=\varphi^*,\quad\forall \omega\in\Omega_1'.
$$
To this end, remark first that, under \ref{H:thm2_K4},
\begin{align*}%\label{eq:cont}
\forall\omega\in\Omega_1',\quad \forall \gamma>0,\quad  \exists N_{\gamma}(\omega)\in\mathbb{N}:\quad  X_R^n(\omega)\in(\setX_{\bar{\varphi}(\omega)})_{\gamma},\quad \forall n\geq N_{\gamma}(\omega).
\end{align*}
Let $\gamma>0$ be fix. Then, under \ref{H:X} and \ref{H:thm2_K4}, $\varphi$ is continuous on the compact set $\setX$ and thus, for any $\omega\in\Omega_1'$, there exists an integer $p_{\omega,\gamma}\in\mathbb{N}$ such that we have both $\lim_{\gamma\cvz}p_{\omega,\gamma}=\infty$ and
\begin{align}\label{eq:inclusion}
(\setX_{\bar{\varphi}(\omega)})_{\gamma}\subseteq (\setX_{\varphi(x)})_{\epsilon_{p_{\omega,\gamma}}},\quad\forall x\in(\setX_{\bar{\varphi}(\omega)})_{\gamma}
\end{align}
where we recall that, for $p\in\mathbb{N}$, $\epsilon_p=2^{-p}$.

Next, for any $\omega\in \Omega_1'$ such that $\bar{\varphi}(\omega)<\varphi^*$, there exists by Lemma \ref{lemma:A_set} a subsequence $(m_i)_{i\geq 1}$ of $(m)_{m\geq 1}$ such that, for $i$ large enough, either 
\begin{align*}
\forall n\in I_{m_i},\,\quad \bbx_R^{n}(\omega)\in \cup_{j\in N_{j^\omega_{mb^{r^*}-1}}}E(j,\delta_{p_{\omega,\gamma}})\subset E(k^*_{mb^{r^*}-1},\delta_{p_{\omega,\gamma}-\kappa})
\end{align*}
or 
\begin{align}\label{eq:Crti1}
\exists n\in I_{m_i}\text{ such that }\bbx_R^{n}(\omega)\not\in (\setX_{\varphi_R^{m_nb^{r^*}-1}(\omega)}\big)_{\epsilon_{p_{\omega,\gamma}}}
\end{align}
for a $k^*_{mb^{r^*}-1}\in 1:\delta^{-(p_{\omega,\gamma}-\kappa)}$ and where the set $N_{j^\omega_{mb^{r^*}-1}}$ is as in Lemma \ref{lemma:A_set} and  $\kappa$ is as in the proof of this latter. If \eqref{eq:Crti1} happens for infinity many $i\in\mathbb{N}$, then, by \eqref{eq:inclusion}, this would contradict the fact that $\omega\in\Omega_1'$. Therefore, for any  $\omega\in \tilde{\Omega}_2:=\{\omega\in\Omega_2':\varphi(\omega)<\varphi^*\}$ there exists a subsequence $(m_i)_{i\geq 1}$ of $(m)_{m\geq 1}$ such, for a $i^*\in\mathbb{N}$,
\begin{align*}
\forall n\in I_{m_i},\, \bbx_R^{n}(\omega)\in E(k^*_{m_ib^{r^*}-1},\delta_{p_{\omega,\gamma}-\kappa}),\quad\forall i\geq i^*.
\end{align*}
Below we use this result to show, by contradiction, that $\P\big(\tilde{\Omega}_2\big)=0$. Assume from henceforth that $\P\big(\tilde{\Omega}_2\big)>0$. To simplify the notation we do as if $\kappa=0$ in what follows.

Let $\omega\in\tilde{\Omega}_2$ be fix from henceforth. Then, let $\gamma$  be small  enough so that there exists a  $p'_{\omega,\gamma}\in\mathbb{N}$ verifying
$$
p_{\omega,\gamma}\wedge p'_{\omega,\gamma}>p^*,\quad\delta_{p_{\omega,\gamma}}\leq \delta_{p'_{\omega,\gamma}}\leq\bar{\delta},\quad \delta_{p_{\omega,\gamma}}\leq v(\delta_{p'_{\omega,\gamma}})\wedge \delta_{p'_{\omega,\gamma}}
$$
and
$$
B_{2\delta_{p'_{\omega,\gamma}}}(\bar{x}^j_{p_{\omega,\gamma}})\cap B_{2\delta_{p'_{\omega,\gamma}}}(x^*)=\emptyset,\quad\forall  j\in   J^{\bar{\varphi}(\omega)}_{p_{\omega,\gamma}}.
$$
Then,  applying Lemma \ref{lem:dense_TV} with $\delta_0=\delta_{p'_{\omega,\gamma}}$ and $\delta=\delta_{p_{\omega,\gamma}}$ yields, for any $m>m^*$ and $j\in   J^{\bar{\varphi}(\omega)}_{p_{\omega,\gamma}}$,
$$
\underline{W}_{(m+1)b^{r^*}}\big(\bar{x}_{p_{\omega,\gamma}}^{j},\bx^*,\delta_{p_{\omega,\gamma}}\big)\subseteq K_{(m+1)b^{r^*}}\big(x, B_{\delta_{p'_{\omega,\gamma}}}(x^*)\big),\quad\forall x\in E(j,\delta_{p_{\omega,\gamma}})
$$
where, for $l<\varphi^*$, $p\in\mathbb{N}$, $J^{l}_{p}$ is as in Lemma \ref{lemma:bound_J}.  

To conclude the proof it suffices to  consider a  $\gamma$  small enough so that one can choose $p'_{\omega,\gamma}$ such that we have both $(\setX_{\bar{\varphi}(\omega)})_{2\gamma}\cap B_{\delta_{p'_{\omega,\gamma}}}(x^*)=\emptyset$ and $\varphi(x)>\varphi(x')$ for all $(x,x')\in (\setX_{\bar{\varphi}(\omega)})_{2\gamma}\times B_{\delta_{p'_{\omega,\gamma}}}(x^*)$. Note that the condition $(\setX_{\bar{\varphi}(\omega)})_{2\gamma}\cap B_{\delta_{p'_{\omega,\gamma}}}(x^*)=\emptyset$ ensures that
$$
E(j,\delta_{p_{\omega,\gamma}})\cap B_{\delta_{p'_{\omega,\gamma}}}(x^*)=\emptyset,\quad\forall  j\in   J^{\bar{\varphi}(\omega)}_{p_{\omega,\gamma}}.
$$
Then, because $\omega\in \Omega_3$, the above computations show that the set $B_{\delta_{p'_{\omega,\gamma}}}(x^*)$ is visited infinitely many times by the sequence $(Y_R^n(\omega))_{n\geq 1}$, which contradicts the fact that $\lim_{n\rightarrow\infty}\varphi(X_R^N(\omega))=\bar{\varphi}(\omega)$ for a $\bar{\varphi}(\omega)<\varphi^*$. Hence, $\tilde{\Omega}_2$ must be empty and the proof is complete.

\subsection{Proof of Theorem \ref{thm:timeConv_Univ}}\label{p-thm:timeConv_Univ}

The proof of this result is based on the proofs of  Lemma \ref{lemma:A_set} and of Theorem \ref{thm:timeConv}. Consequently,  below we only describe the steps that need to be modified. The notation used below is the same as in the proofs of Lemma \ref{lemma:A_set} and Theorem \ref{thm:timeConv}, and is therefore not recalled in the following.

First, in what follows we do as if $\kappa=0$ to simplify the notation. From the proofs of Lemma \ref{lemma:A_set} and of Theorem \ref{thm:timeConv} it must be clear that this assumption will not modify the structure  of the proof of the theorem.

Let $p^*=p_1^*$, with $p^*=p_1^*$ as in Lemma \ref{lemma:bound_J}, $p\in\mathbb{N}$ be such that $p>p^*$,  $\epsilon_p=2^{-p}$ and $N_{\epsilon_p}\in\mathbb{N}$ be such that $x_{\infty}^n\in(\setX_{\bar{\varphi}})_{\epsilon_p}$ for all $n\geq  N_{\epsilon_p}$.

Next, let $m_p\in\mathbb{N}$ be  such that we have both $b^{m_p}> N_{\epsilon_p}$ and $k_{m_p}^{(p)}\leq m_p$. Note that this is always possible to choose such a $m_p$. Indeed, $k_{m}^{(p)}=k^{(p)}+\bar{k}_m^{(p^*)}$  where $k_m^{(p^*)}$ is the largest integer $k$ for which we have both $b^k\leq\bar{K}^{-d}_{b^{m+1},\delta_{p^*}}$ and $(k/d)\in\mathbb{N}$ (see the proof of Lemma \ref{lemma:A_set} with $p_2^*=p^*$). Under \ref{H:K5}, $b^{-m}\bar{K}^{-d}_{b^{m+1},\delta_{p^*}}\cvz$ as $m\rightarrow\infty$ and thus, for $m_p$ large enough, $k_{m_p}^{(p)}< m_p$. Below, we assume $m_p$ is such that $m_p\rightarrow\infty$ as $p\rightarrow\infty$, which is possible under \ref{H:thm2_K4}.

By Lemma \ref{lemma:bound_J}, $|J^{\bar{\varphi}}_{p}|\leq \bar{C}$ when $d=1$, and consequently, the set $\bar{W}^{\bar{\varphi}}_{b^{m_p+1}}(j,\delta_{p})$  contains at most $2^d\bar{C}^2(b-1) b^{t}$ points of the $(t,k_{m_p}^{(p)},1)$-net $\{\bu_{\infty}^{n'}\}_{n'=b^{m_p}}^{b^{m_p}+b^{k_{m_p}^{(p)}}}$. Hence, if for all $n'\geq N_{\epsilon_p}$  only moves from $(\setX_{\bar{\varphi}})_{\epsilon_p}$ to $(\setX_{\bar{\varphi}})_{\epsilon_p}$ occur, then, by Lemma \ref{lem:dense_TV}, for a $$
\tilde{n}\in b^{m_p}:(b^{m_p}+b^{k_{m_p}^{(p)}}-\eta_{p}-1),
$$
the point set $\{x_{\infty}^{n'}\}_{n'=\tilde{n}}^{\tilde{n}+\eta_{p}}$ is such that $x_{\infty}^{n'}\in  E(k^*,\delta_{p})$ for a $k^*\in J^{\bar{\varphi}}_{p}$ and for all $n'\in \tilde{n}:(\tilde{n}+\eta_{p})$, where  $\eta_{p}\geq \big\lfloor \frac{b^{k_{m_p}^{(p)}}}{2^d\bar{C}^2b^t}\big\rfloor$; note that $\eta_{p}\rightarrow\infty$ as $p\rightarrow\infty$ because $k_{m_p}^{(p)}\rightarrow\infty$ as $p\rightarrow\infty$.

As for the proof of Theorem  \ref{thm:timeConv}, we prove the result by contradiction; that is, we show below that if $\bar{\varphi}\neq\varphi^*$, then the point set $\{x_{\infty}^{n'}\}_{n'=\tilde{n}}^{\tilde{n}+\eta_{p}}$ cannot be such that $x_{\infty}^{n'}\in  E(k^*,\delta_{p})$ for a $k^*\in J^{\bar{\varphi}}_{p}$ and for all $n'\in \tilde{n}:(\tilde{n}+\eta_{p})$.

 To see this, let $k^{(p)}_{0}$ be the largest integer $k$ which verifies $\eta_{p}\geq 2b^{k}$, so that $\{\bu_{\infty}^n\}_{n=\tilde{n}}^{\tilde{n}+\eta_{p}}$ contains at least one $(t,k^{(p)}_{0},1)$-net in base $b$; note that $k^{(p)}_{0}\rightarrow\infty$ as $p\rightarrow\infty$. Let $\bx^*\in\setX$ be a global maximizer of $\varphi$, which exists under \ref{H:X} and \ref{H:thm2_K4}. Then, using Lemma  \ref{lem:dense_TV}, there is at least one $n'\in \tilde{n}:(\tilde{n}+\eta_{p})$ such that $F^{-1}_{K_{b^{m_p+1}}}(\bx_{\infty}^{n'-1}, \bu_{\infty}^{n'})\in B_{\delta_{m_p}^{(p)}}(\bx^*)$, with
\begin{align*}
\delta_{m_p}^{(p)}&=3b^{\frac{t+d+1-k^{(p)}_{0}}{d}}\Big(0.5\tilde{K}_{b^{m_p+1}}\left(1 \wedge (0.25\tilde{K}_{b^{m_p+1}}/C_{b^{m_p+1}})^{d}\right)\Big)^{-1}\\
&+ \delta_{p}\Big((0.25\tilde{K}_{b^{m_p+1}}/
C_{b^{m_p+1}})^d\wedge 1\Big)^{-1}.
\end{align*}
%and where, for $\gamma>0$,  $\delta_{p_{\gamma}}>0$ is as in the proof of Theorem \ref{thm:timeConv}.

To see that this is indeed the case we need to check that all the requirements of Lemma \ref{lem:dense_TV} are fulfilled; that is we need to check that 
\begin{enumerate}
\item\label{1}  $\delta_{m_p}^{(p)}\geq \delta'$ for a $\delta'>0$  such that $k_{b^{m_p+1},\delta'}=k_{0}^{(p)}$;
\item\label{2} $\delta_{p}\leq v_{b^{m_p+1}}\big(\delta_{m_p}^{(p)}\big)$;
\item\label{3} $\delta_{m_p}^{(p)}\leq \bar{\delta}_{b^{m_p+1},\delta_{p^*}}$.
\end{enumerate}

To check  \ref{1}. note that we can take
$$
\delta'=3b^{\frac{t+d+1-m_d}{d}}\Big(0.5\tilde{K}_{b^{m_d+1}}\left(1 \wedge (0.25\tilde{K}_{b^{m_d+1}}/C_{b^{m_d+1}})^{d}\right)\Big)^{-1}
$$
so that $\delta_{m_p}^{(p)}\geq \delta'$ as required. Condition  \ref{2}. holds as well since
\begin{align*}
v_{b^{m_p+1}}(\delta_{m_p}^{(p)})&=\delta_{m_p}^{(p)}\Big((0.25\tilde{K}_{b^{m_p+1}}/
C_{b^{m_p+1}})^d\wedge 1\Big)\\
&=\delta_{p}+\delta'\Big((0.25\tilde{K}_{b^{m_p+1}}/
C_{b^{m_p+1}})^d\wedge 1\Big)\\
&>\delta_{p}
\end{align*}
while \ref{3}. is true for $p^*$ large enough using the remarks of Section \ref{p-remarks}.

To conclude the proof note that, as $p\rightarrow\infty$, $b^{-k^{(p)}_{0}/d}/\tilde{K}_{b^{m_p+1}}\cvz$. To see this, notice that by the definition of $k^{(p)}_{0}$, we have (since $b\geq 2$)
$$
2b^{k^{(p)}_{0}+1}\geq\eta_{p} +1\geq \frac{b^{k_{m_p}^{(p)}}}{2^d \bar{C}^2b^t}.
$$
Thus,
$$
k^{(p)}_{0}\geq k_{m_p}^{(p)}-C,\quad C:=\frac{\log(2^{d-1}\bar{C}^2b^t)}{\log b}+1
$$
and therefore
$$
b^{-k^{(p)}_{0}/d}/\tilde{K}_{b^{m_p+1}}\leq  b^{\frac{C+1}{d}}b^{-\frac{k_{m_p}^{(p)}+1}{d}}/\tilde{K}_{b^{m_p+1}}\cvz
$$
as $m_p\rightarrow\infty$  under ($C_5'$). Thus, since the sequence $(\tilde{K}_k/C_{k,})_{k\geq 1}$ is bounded above under ($C_5'$), this shows that $\delta_{m_p}^{(p)}\rightarrow 0$ as $p\rightarrow\infty$ and the result follows.

\subsection{Proof of Corollary \ref{cor:studentRW} and proof of Corollary \ref{cor:ASA}\label{subsec:example_proofs}}

Conditions \ref{H:K1}-\ref{H:K3} are trivially verified. Below we only show that \ref{H:K4} holds since, from the computations used to establish \ref{H:K4}, it is trivial to verify that \ref{H:K4b}, \ref{H:K5} and the assumptions of Theorem \ref{thm:timConvSA} on $(K_n)_{n\geq 1}$ are verified. To simplify the notation  we assume in the following that $\sigma_{n,i}=\sigma_n$ for all $i\in 1:d$ and for all $n\geq 1$.

\subsubsection{Proof of Corollary \ref{cor:studentRW}}

For $n\geq 1$ and $i\in 1:d$, we use the shorthand  $K_{n,i}(y_i|x_i)=f_{[0,1]}(y_i;x_i,\nu, \sigma_{n}^2)$ and  $\tilde{K}_{n,i}(y_i|x_i)=f(y_i;x_i,\nu, \sigma_{n}^2)$. For $a<b$ we denote by $P_{\nu}(\xi,\sigma,[a,b])$  the probability that $z_i\in [a,b]$ when $z_i\sim t_{\xi}(\mu,\sigma^2)$. 

Since, for all $(\bx,\by)\in\setX^2$ and  $i\in 1:d$, 
$$
K_{n,i}(y_i|x_i)\geq\tilde{K}_{n}:=c_{\nu}\sigma_{n}^{-1}\Big(1+(\nu \sigma^2_{n})^{-1}\Big)^{-\frac{\nu+1}{2}},\quad c_{\nu}=\frac{\Gamma\big(\frac{\nu+1}{2}\big)}{\Gamma(\nu/2)\sqrt{\nu\pi}}
$$
where $n^{-1/d}/\tilde{K}_n=\bigO(1)$ under the assumptions of the corollary,  the first part of \ref{H:K4} is verified.  

To see that the other parts of \ref{H:K4} hold as well, 
let $(\tilde{\bx},\bx')\in\setX^2$ be such that there exists a $\delta_0>0$ which verifies $\lambda_d\big(B_{2\delta_0}(\tilde{\bx})\cap B_{2\delta_0}(\bx')\big)=0$. Let $\delta\in (0,\delta_0]$ and remark that $|x_i-y_i|\geq  \delta_0$ for all $(\bx,\by)\in B_{\delta}(\tilde{\bx})\times B_{\delta}(\tilde{\bx})$. Let $P_n=\sup_{x\in [0,1]}P_{\nu}(x,\sigma_{n},[0,1])$ and note that $P_n\leq P_{n+1}$ for all $n\geq 1$ because the sequence $(\sigma_{n})_{n\geq 1}$ is non-increasing. Therefore, for all $i\in 1:d$, 
$$
K_{n,i}(y_i|x_i)\leq\bar{K}_{n,\delta_0}:=\frac{c_{\nu}}{P_1\sigma_{n}} \Big(1+\delta_0^2(\nu \sigma^2_{n})^{-1}\Big)^{-\frac{\nu+1}{2}},\quad \forall (\bx,\by)\in B_{\delta}(\tilde{\bx})\times B_{\delta}(\tilde{\bx}).
$$

Notice that, under the assumptions of the corollary   we  have both  $\bar{K}^{\gamma}_{n}\cvz$ and $n^{-1/d}/\bar{K}_{n,\delta_0}=\bigO(1)$ as $n\rightarrow\infty$. Hence, the last part of \ref{H:K4} holds.

To show the second part of \ref{H:K4} is verified, let  $(\tilde{\bx},\bx')\in\setX^2$ and $\delta_0>0$ be as above and note that

\begin{equation}\label{eq:bb_asa}
\begin{split}
|F_{K_{n,i}}(x_i, y_i)-F_{K_{n,i}}(\tilde{x}_i, x'_i)|& \leq \frac{\big|\tilde{K}_{n,i}(x_i, [0,y_i])-\tilde{K}_{n,i}(\tilde{x}_i,  [0,x_i'])\big|\big|}{P_1}\\
&+\frac{\big|\tilde{K}_{n,i}(\tilde{x}_i, [0,1])-\tilde{K}_{n,i}(x_i,  [0,1])\big|}{P_1}\\
&\leq \frac{\big|F_{\tilde{K}_{n,i}}(x_i, y_i)-F_{\tilde{K}_{n,i}}(\tilde{x}_i, x'_i)\big|}{P_1}\\
&+\frac{\big|F_{\tilde{K}_{n,i}}(x_i, 1)-F_{\tilde{K}_{n,i}}(\tilde{x}_i, 1)\big|}{P_1}\\
&+2\frac{\big|F_{\tilde{K}_{n,i}}(x_i, 0)-F_{\tilde{K}_{n,i}}(\tilde{x}_i, 0)\big|}{P_1}.
\end{split}
\end{equation}
Next, note that $\text{sgn}(y_i-x_i)=\text{sgn}(x'_i-\tilde{x}_i)$. Assume first that $y_i-x_i\geq 0$ and, without loss of generality, that $y_i-x_i\geq \tilde{x}_i-x'_i\geq 0$. Then, using the fact that the function $\arctan$ is concave on $[0,\infty)$, we have
\begin{align*}
|F_{\tilde{K}_{n,i}}(x_i, y_i)-F_{\tilde{K}_{n,i}}(\tilde{x}_i, x'_i)|&=\frac{1}{\pi}\Big\{\arctan\Big(\frac{y_i-x_i}{\sigma_{n}}\Big)-\arctan\Big(\frac{x'_i-\tilde{x}_i}{\sigma_{n}}\Big)\Big\}\\
&\leq \frac{(y_i-x'_i)-(x_i-\tilde{x}_i)}{\pi \sigma_{n}}\frac{1}{1+\Big(\frac{x'_i-\tilde{x}_i}{\sigma_{n}}\Big)^2}\\
&\leq\frac{2\delta}{\pi\sigma_n}.
\end{align*}
Assume now that $y_i-x_i\leq 0$ and, without loss of generality,  that $y_i-x_i\leq \tilde{x}_i-x'_i<0$. Then, using the fact that the function $\arctan$ is convex on $(-\infty,0]$, we have
\begin{align*}
|F_{\tilde{K}_{n,i}}(x_i, y_i)-F_{\tilde{K}_{n,i}}(\tilde{x}_i, x'_i)|&=\frac{1}{\pi}\Big\{\arctan\Big(\frac{x'_i-\tilde{x}_i}{\sigma_{n}}\Big)-\arctan\Big(\frac{y_i-x_i}{\sigma_{n}}\Big)\Big\}\\
&=\frac{1}{\pi}\Big\{-\arctan\Big(\frac{y_i-x_i}{\sigma_{n}}\Big)-\Big(-\arctan\Big(\frac{x'_i-\tilde{x}_i}{\sigma_{n}}\Big)\Big)\Big\}\\
&\leq -\frac{(y_i-x'_i)-(x_i-\tilde{x}_i)}{\pi \sigma_{n}}\frac{1}{1+\Big(\frac{x'_i-\tilde{x}_i}{\sigma_{n}}\Big)^2}\\
&\leq\frac{2\delta}{\pi\sigma_n}.
\end{align*}
Similarly, repeating these last computations with $y_i=x'_i=0$ and   with $y_i=x'_i=1$ yields, using \eqref{eq:bb_asa}
$$
|F_{K_{n,i}}(x_i,y_i)-F_{K_{n,i}}(\tilde{x}_i, x_i')|\leq \delta C_n,\quad C_n:= \frac{8}{P_1\pi\sigma_n}
$$
and the result follows from the assumptions on $(\sigma_{n})_{n\geq 1}$.

\subsubsection{Proof of Corollary \ref{cor:ASA}}
Since, for all $(\bx,\by)\in\setX^2$ and  $i\in 1:d$, 
$$
K_{n,i}(y_i|x_i)\geq \tilde{K}_{n}:=\frac{1}{2(1+\sigma_{n})\log(1+\sigma_{n}^{-1})}
$$
where $n^{-1/d}/\tilde{K}_n=\bigO(1)$  under the assumptions of the corollary,  the first part of \ref{H:K4} is verified.

To see the other  parts of \ref{H:K4} hold as well, let $P_{n}=\sup_{x_i\in [0,1]}\tilde{K}_{n,i}(x_i, [0,1])$ and note that $P_{n+1}\geq P_n$ for all $n\geq n'$ and for a $n'\geq 1$ large enough since the sequence $(\sigma_n)_{n\geq 1}$ is non-increasing. Therefore, there exists a constant $P_{\setX}>0$ such that $P_n\geq P_{\setX}$ for all $n\geq 1$. Let $(\tilde{\bx},\bx')\in\setX^2$  be such that there exists a $\delta_0>0$ which verifies $\lambda_d\big(B_{2\delta_0}(\tilde{\bx})\cap B_{2\delta_0}(\bx')\big)=0$.

 Let $\delta\in (0,\delta_0]$ and note that,   for all $(\bx,\by)\in B_{\delta}(\tilde{\bx})\times B_{\delta}(\tilde{\bx})$, $|x_i-y_i|\geq \delta_0$ and thus
$$
K_{n,i}(y_i|x_i)\leq \bar{K}_{n,\delta_0}:=\frac{1}{C_{\setX}}\left\{2\left(\delta_0+\sigma_{n}\right)
\log(1+\sigma_{n}^{-1})\right\}^{-1}.
$$
Therefore, $\bar{K}_{n,\delta_0}=\smallo(1)$ under the assumptions on $(\sigma_n)_{n\geq 1}$. Note also that, under the assumptions of the corollary, $n^{-1/d}/\bar{K}_{n,\delta_0}=\bigO(1)$, showing that the first and the last part of \ref{H:K4} hold.

Finally, to show the second part of \ref{H:K4} is verified, let  $(\tilde{\bx},\bx')\in\setX^2$ and $\delta_0>0$ be as above. Let $\delta\in(0,\delta_0]$ and $(\bx,\by)\in B_{\delta}(\tilde{\bx})\times B_{\delta}(\bx')$.  Note that $\text{sgn}(y_i-x_i)=\text{sgn}(x'_i-\tilde{x}_i)$. Without loss of generality we assume that $\text{sgn}(y_i-x_i)=1$ and that $|y_i-x_i|\geq |\tilde{x}_i-x'_i|$. Then, using the fact that $\log(1+x)\leq x$ for any $x> -1$, we have
\begin{align*}
|F_{\tilde{K}_{n,i}}(x_i, y_i)-F_{\tilde{K}_{n,i}}(\tilde{x}_i, x'_i)|&\leq\frac{\log\left(1+\frac{|y_i-x_i|}{\sigma_{n}}\right)-\log\left(1+\frac{|\tilde{x}_i-x'_i|}{\sigma_{n}}\right)}{2\log\left(1+\sigma_{n}^{-1}\right)}\\
&=\frac{\log\left(\frac{\sigma_n+|y_i-x_i|}{\sigma_{n}+|\tilde{x}_i-x'_i|}\right)}{2\log\left(1+\sigma_{n}^{-1}\right)}\\
&=\frac{\log\left(1+\frac{|y_i-x_i|-|\tilde{x}_i-x'_i|}{\sigma_{n}+|\tilde{x}_i-x'_i|}\right)}{2\log\left(1+\sigma_{n}^{-1}\right)}\\
&\leq \frac{\frac{|y_i-x_i|-|\tilde{x}_i-x'_i|}{\sigma_{n}+|\tilde{x}_i-x'_i|}}{2\log\left(1+\sigma_{n}^{-1}\right)}\\
&\leq \frac{\Big||y_i-x_i|-|x_i'-\tilde{x}_i|\Big|}{2\,\sigma_n\,\log\left(1+\sigma_{n}^{-1}\right)}\\
&\leq \frac{|(y_i-x_i)-(x'_i-\tilde{x}_i)|}{2\,\sigma_n\,\log\left(1+\sigma_{n}^{-1}\right)}\\
&\leq \frac{|y_i-x_i'|+|x_i-\tilde{x}_i|}{2\,\sigma_n\,\log\left(1+\sigma_{n}^{-1}\right)}\\
&\leq \frac{\delta}{\sigma_{n}\log\left(1+\sigma_{n}^{-1}\right)}.
\end{align*}
Similarly, repeating these last computations with $y_i=x'_i=0$ and   with $y_i=x'_i=1$ yields, using \eqref{eq:bb_asa} (with $P_1$ replaced by $P_\setX$)
$$
|F_{K_{n,i}}(x_i,y_i)-F_{K_{n,i}}(\tilde{x}_i, x_i')|\leq  \delta C_n\quad C_n:=\frac{4}{\sigma_{n}\log\left(1+\sigma_{n}^{-1}\right)\, P_{\setX}}
$$
and the result follows from the assumptions on $(\sigma_{n})_{n\geq 1}$.

\section*{Acknowledgement}

The authors acknowledge support from DARPA under Grant No. FA8750-14-2-0117.

\bibliographystyle{apalike}
\bibliography{complete}

\end{document}